\documentclass[11pt]{article}
\usepackage{amsmath,amsthm,amssymb}
\usepackage[dvipsnames]{xcolor}

\usepackage[hyperfootnotes=false]{hyperref}

\usepackage{pxfonts}
\usepackage{mathrsfs}

\usepackage{titlesec}
\titleformat{\paragraph}
{\normalfont\normalsize\bfseries}{\theparagraph}{1em}{}
\titlespacing*{\paragraph}
{0pt}{3.25ex plus 1ex minus .2ex}{1.5ex plus .2ex}

\def\titlerunning#1{\gdef\titrun{#1}}
\makeatletter
\def\author#1{\gdef\autrun{\def\and{\unskip, }#1}\gdef\@author{#1}}
\def\address#1{{\def\and{\\\hspace*{18pt}}\renewcommand{\thefootnote}{}%
\footnote {#1}}%
\markboth{\autrun}{\titrun}}
\makeatother
\def\email#1{\hspace*{4pt}{\em e-mail}: #1}
\def\MSC#1{{\renewcommand{\thefootnote}{}%
\footnote{\emph{Mathematics Subject Classification (2020):} #1}}}
\def\keywords#1{\par\medskip
\noindent\textbf{Keywords:} #1}

\newtheorem{theorem}{Theorem}[section]
\newtheorem{prop}[theorem]{Proposition}
\newtheorem{cor}[theorem]{Corollary}
\newtheorem{lemma}[theorem]{Lemma}

\theoremstyle{definition}

\numberwithin{equation}{section}

\def\d{{\boldsymbol \delta}}

\def\0{\mathbf 0}

\def\cB{\mathcal B}

\def\cI{\mathcal I}

\def\cL{\mathcal L}
\def\cM{\mathcal M}

\def\cP{\mathcal P}
\def\cQ{\mathcal Q}

\def\cD{\mathcal D}

\def\cR{\mathcal R}
\def\cS{\mathcal S}

\def\PG{\mathrm{PG}}

\def\F{{\mathbb F}}

\def\N{{\mathrm N}}

\def\PGL{{\rm PGL}}

\def\GL{{\rm GL}}

\def\PGaL{{\rm P\Gamma L}}

\def\diag{{\rm diag}}
\def\rk{{\rm rk}}

\newcommand{\Fix}{\mathrm{Fix}}

\begin{document}


\baselineskip=16pt


\titlerunning{}

\title{Segre Varieties and Desarguesian Spreads}

\author{Antonio Cossidente \and Giuseppe Marino \and Francesco Pavese \and Paolo Santonastaso \and John Sheekey}

\date{}

\maketitle
\address{A. Cossidente: Dipartimento di Ingegneria, Università degli Studi della Basilicata, Contrada Macchia Romana, 85100, Potenza, Italy; \email{ antonio.cossidente@unibas.it}
}
\address{G. Marino: Dipartimento di Matematica e Applicazioni ``R. Caccioppoli'', Università degli Studi di Napoli, Via Cintia, Monte Sant'Angelo, 80126 Napoli, Italy; \email{giuseppe.marino@unina.it}
}
\address{F. Pavese: Dipartimento di Meccanica, Matematica e Management, Politecnico di Bari, Via Orabona 4, 70125 Bari, Italy; \email{francesco.pavese@poliba.it}
}
\address{P. Santonastaso: Dipartimento di Meccanica, Matematica e Management, Politecnico di Bari, Via Orabona 4, 70125 Bari, Italy \\
Dipartimento di Matematica e Applicazioni ``R. Caccioppoli'', Università degli Studi di Napoli, Via Cintia, Monte Sant'Angelo, 80126 Napoli, Italy; \email{paolo.santonastaso@poliba.it}
}
\address{J. Sheekey: School of Mathematics and Statistics, University College Dublin,  Belfield, Dublin 4, Ireland; \email{john.sheekey@ucd.ie}
}

\bigskip

\MSC{Primary 51E20; Secondary 05B25.}



\begin{abstract}
Let $\PG(n-1,q)$ denote the $(n-1)$-dimensional projective space over $\F_q$. We investigate the intersection of two Desarguesian $(h-1)$-spreads of $\PG(kh-1,q)$ and show that it is determined by a subgeometry over a suitable extension field. Our approach combines a characterization of subsets of points of $\PG(k-1,q^h)$ closed under $q$-order subgeometries with a matrix model for Desarguesian spreads based on Moore matrices. This leads naturally to the notion of generalized Segre varieties $\mathcal S^r_{kr-1,h-1}(q)$ and a geometric description of their maximal subspaces. As a main application, we prove that if two distinct Desarguesian $(h-1)$-spreads of $\PG(kh-1,q)$ contain a common pseudo-arc of size $k+1$, then their intersection is precisely the system $\cR^r_{h,q}$ of $(h-1)$-dimensional subspaces of $\mathcal S^r_{kr-1,h-1}(q)$, for some proper divisor $r$ of $h$.
\end{abstract}

\keywords{Segre Varieties; Desarguesian Spreads; Moore Matrices; Subgeometries.}

\section{Introduction}


Let $\F_q$ be the finite field with $q$ elements, where $q$ is a prime power.
An $(h-1)$-spread of $\PG(n-1,q)$ is a partition of the point set into mutually
disjoint $(h-1)$-dimensional subspaces; such a spread exists if and only if
$h$ divides $n$.
A distinguished class of spreads is formed by the \emph{Desarguesian spreads}.
Their study goes back to the foundational work of B.~Segre~\cite{segre1964teoria}. These spreads play a central role in finite geometry, where they arise naturally
in connection with translation planes, blocking sets, linear sets, eggs, translation generalized quadrangles,
and several other incidence structures, see e.g.
\cite{HirschfeldThas,Lavrauw2015,bader2011desarguesian}. 

A useful description of a Desarguesian spread is
in terms of its \emph{director spaces}.
More precisely, if $\PG(kh-1,q)$ is embedded as a subgeometry of
$\PG(kh-1,q^h)$ and fixed pointwise by a semilinear collineation $\Psi$ of order $h$,
then a $(k-1)$-dimensional subspace $\Theta$, such that $\Theta,\Theta^{\Psi},\ldots,\Theta^{\Psi^{h-1}}$ span $\PG(kh-1,q^h)$, determines a spread whose elements are the intersections of
$\PG(kh-1,q)$ with the subspaces
$\langle P,P^\Psi,\dots,P^{\Psi^{h-1}}\rangle_{q^h}$,
where $P$ ranges over $\Theta$.
The subspaces $\Theta,\Theta^\Psi,\dots,\Theta^{\Psi^{h-1}}$ are called the
director spaces of the spread, and they uniquely determine it
\cite{bader2011desarguesian}. On the other hand, the Segre variety $\cS_{k-1,h-1}(q)$ is the image of the Segre
embedding
\[
\sigma_{k,h}:\PG(k-1,q)\times \PG(h-1,q)\longrightarrow \PG(kh-1,q),
\]
and consists of the points corresponding to rank-one $k\times h$ matrices.
Its geometry is governed by two families of maximal subspaces, the \textit{reguli},
whose elements are mutually disjoint within each family and intersect pairwise
in exactly one point across the two families.
Segre varieties thus provide the natural geometric model for rank-one tensors
and appear throughout the theory of field reduction, linear sets, and
semifields
\cite{HirschfeldThas,Lavrauw2015,Lavrauw2014segre}. A particularly significant link between these two structures is the following:
the elements of a Desarguesian spread corresponding to the points of a
canonical subgeometry inside a director space form precisely the
$(h-1)$-regulus of a Segre variety, see e.g. \cite[Theorem 2.6]{Lavrauw2015}.

The aim of this paper is to determine the intersection of two
Desarguesian spreads of $\PG(kh-1,q)$.
We show that such intersections are governed by subfield geometry and can be
described in terms of suitable generalized Segre varieties. Our first result is a characterization of subsets of points of
$\PG(k-1,q^h)$ that are closed with respect to $q$-order subgeometries.
This extends a theorem of Rottey and Van de Voorde for the projective line and
shows that such sets necessarily arise from subgeometries defined over a
subfield \cite{Rottey}. Next, we introduce the geometric setting through a matrix model for the Desarguesian spread. More precisely, we define the ambient projective space over $\F_{q^h}$ in terms of the vector space of $h\times k$
matrices with entries in $\F_{q^h}$. We endow this space with a suitable semilinear
collineation whose fixed-point set is a canonical subgeometry isomorphic to
$\PG(kh-1,q)$. In this model, the points of the fixed subgeometry are represented by Moore matrices and we describe the relation between its spread elements and its director
spaces. Within this framework, we introduce the notion of a \emph{generalized Segre
variety} $\mathcal S^r_{kr-1,h-1}(q)$, associated with a proper divisor $r$ of
$h$. More precisely, if $\theta_{q^r}\subset \Theta$ denotes the canonical
$q^r$-subgeometry of a director space, then the corresponding subset
$\cR^r_{h,q}$ of the Desarguesian spread $\cD_h$, consisting of the $(h-1)$-dimensional subspaces
associated with the points of $\theta_{q^r}$, forms one of the two natural
systems of maximal subspaces of $\mathcal S^r_{kr-1,h-1}(q)$. The generalized
Segre variety is defined as the union of the elements of $\cR^r_{h,q}$; moreover,
it is also partitioned by a second system $\cR^r_{kr,q}$ of mutually disjoint
$(kr-1)$-dimensional subspaces. In this sense, $\mathcal{R}^r_{h,q}$ and $\mathcal{R}^r_{kr,q}$ play the role of
the two reguli of the classical Segre variety.

Our main theorem determines the structure of the intersection of two distinct
Desarguesian $(h-1)$-spreads of $\PG(kh-1,q)$ containing a common pseudo-arc of
size $k+1$. Precisely, the common
pseudo-arc first determines a common $(h-1)$-regulus of a Segre variety; then, we prove that, via the matrix-geometric framework,  any additional common spread element gives rise to a proper
canonical $q^r$-subgeometry of a director space containing the canonical
$q$-subgeometry. At that point, by making use of
the subgeometry-closure theorem,
we show that the intersection is exactly the system $\cR^r_{h,q}$ of $(h-1)$-dimensional subspaces
of a generalized Segre variety $\cS^r_{kr-1,h-1}(q)$, for some proper divisor
$r$ of $h$.

The paper is organized as follows.
In Section~\ref{sec:preliminaries}, we recall the preliminaries on
Desarguesian spreads and Segre varieties, and we establish the characterization
of point sets closed under $q$-order subgeometries.
Section~\ref{sec:geometricsetting} develops the geometric framework, including
an explicit matrix model for the Desarguesian spread, the introduction of
generalized Segre varieties, and the study of their properties.
In Section~\ref{sec:intersection}, we determine the intersection structure of
two Desarguesian spreads and prove the main theorem.

\section{Preliminaries} \label{sec:preliminaries}

In this section, we collect some definitions and known results that will be
useful throughout the paper. In particular, we briefly recall Desarguesian spreads and Segre varieties, together with some of their basic properties. We also show a characterization result for sets of points having a closure property with respect to $q$-order subgeometries.

\subsection{Desarguesian spreads}

An $(h-1)$-spread of the projective space $\PG(n-1,q)$ is a collection
$\mathcal S$ of pairwise disjoint $(h-1)$-dimensional subspaces such that
every point of $\PG(n-1,q)$ is contained in exactly one element of
$\mathcal S$. Equivalently, an $(h-1)$-spread induces a partition of the
point set of $\PG(n-1,q)$ into $(h-1)$-dimensional subspaces.
Two spreads $\mathcal D_1$ and $\mathcal D_2$ of $\PG(n-1,q)$ are said to be
equivalent if there exists a collineation of $\PG(n-1,q)$ mapping
$\mathcal D_1$ onto $\mathcal D_2$.

A necessary and sufficient condition for the existence of an $(h-1)$-spread
in $\PG(n-1,q)$ is that $h$ divides $n$, as shown by Segre \cite{segre1964teoria}.
Accordingly, we write $n=hk$ for some positive integer $k$.
A relevant class of spreads is given by the \emph{Desarguesian spreads},  which can
be described as follows; see \cite{segre1964teoria}. Let $\Sigma \simeq \PG(kh-1,q)$ be embedded as a $q$-order subgeometry of
$\PG(kh-1,q^h)$ in such a way that $\Sigma$ coincides with
the set of fixed points
$
\Sigma = \Fix(\Psi)
$
of a semilinear collineation $\Psi$ of $\PG(kh-1, q^h)$ of order $h$.
For any point $P \in \PG(kh-1, q^h)$, define
\begin{align*}
X^*(P) := \langle P, P^{\Psi}, \ldots, P^{\Psi^{h-1}} \rangle_{q^h}.    
\end{align*}
Observe that $X^*(P)$ has dimension at most $h-1$ for every $P \in \PG(kh-1, q^h)$. Let $\Theta$ be a $(k-1)$-dimensional subspace of $\PG(kh-1, q^h)$ satisfying
\begin{equation}\label{eq:spanentiredesarguesian}
\langle \Theta, \Theta^{\Psi}, \ldots, \Theta^{\Psi^{h-1}} \rangle_{q^h}
= \PG(kh-1, q^h).
\end{equation}
Then, for every point $P \in \Theta$, Condition~\eqref{eq:spanentiredesarguesian}
implies that $X^*(P)$ is an $(h-1)$-dimensional subspace of $\PG(kh-1, q^h)$. By construction, each subspace $X^*(P)$ with $P \in \Theta$ is stabilized by
$\Psi$. Hence, by \cite[Lemma~1]{lunardon1999normal}, the intersection
\begin{align*}
X(P) := X^*(P) \cap \Sigma  
\end{align*}
is an $(h-1)$-dimensional subspace of $\Sigma$.
As $P$ ranges over $\Theta$, we obtain the family
\[
\cL(\Theta)
:= \{\, X(P) = X^*(P) \cap \Sigma : P \in \Theta \,\},
\]
which consists of $q^{h(k-1)} + q^{h(k-2)} + \cdots + q^h + 1$ mutually
disjoint $(h-1)$-dimensional subspaces of $\Sigma$, and therefore forms an
$(h-1)$-spread of $\Sigma$.

Any $(h-1)$-spread of $\PG(kh-1,q)$ that is $\PGL$-equivalent to a spread  $\cL(\Theta)$ is called a \emph{Desarguesian spread} of
$\PG(kh-1,q)$.
Moreover, the $(k-1)$-dimensional subspaces
$
\Theta, \Theta^{\Psi}, \ldots, \Theta^{\Psi^{h-1}}
$
are uniquely determined by the spread $\cL(\Theta)$.
More precisely, if $\cL(\Theta) = \cL(\Theta')$ for some
$(k-1)$-subspace $\Theta'$ of $\PG(kh-1, q^h)$, then necessarily
$\Theta' = \Theta^{\Psi^i}$ for some $i \in \{0,\ldots,h-1\}$.
For this reason, the subspaces
$\Theta, \Theta^{\Psi}, \ldots, \Theta^{\Psi^{h-1}}$
are referred to as the \emph{director spaces} of the Desarguesian spread
$\cL(\Theta)$.
Further details can be found in
\cite{bader2011desarguesian, lunardon1999normal, segre1964teoria, VdV}.

\subsection{Segre varieties}

Let $k,h\ge 1$. The \emph{Segre map}
\[
\sigma_{k,h} : \PG(k-1,q)\times \PG(h-1,q)
\longrightarrow \PG(kh-1,q)
\]
is defined by
\[
\left((x_1,\ldots,x_k),(y_1,\ldots,y_h)\right) \mapsto (x_1y_1,\ldots,x_1y_h,\ldots,x_ky_1,\ldots,x_ky_h),
\]
where the coordinates $x_i y_j$ are taken in lexicographical order.
The image of the Segre map $\sigma_{k,h}$ is an algebraic variety called the \emph{Segre variety} and it is denoted by
$\mathcal S_{k-1,h-1}(q)$.

\noindent We may represent the points of $\PG(kh-1,q)$ by the homogeneous coordinates \[
(x_{11},x_{12},\ldots,x_{1h};
    x_{21},\ldots,x_{2h};
    \ldots;
    x_{k1},\ldots,x_{kh}).
\]
In this way, to each point of $\PG(kh-1,q)$ we can naturally associate a
$k\times h$ matrix $(x_{ij})$, with $1\le i\le k$ and $1\le j\le h$.
Consider the family of quadrics whose defining equations are given by the
$2\times2$ minors of this matrix.
It can be shown that the Segre variety $\mathcal S_{k-1,h-1}(q)$ coincides with
the intersection of all quadrics defined in this way, see \cite[Theorem 4.94]{HirschfeldThas}.
Equivalently, the points of the Segre variety $\mathcal S_{k-1,h-1}(q)$ are precisely
those whose coordinates form a matrix $(x_{ij})$ of rank $1$, see \cite[Theorem 4.101]{HirschfeldThas}.

The variety $\mathcal S_{k-1,h-1}(q)$ contains
two distinguished families of maximal subspaces. Fixing a point of $\PG(k-1,q)$ and letting the point of $\PG(h-1,q)$ vary, we
obtain an $(h-1)$-dimensional subspace entirely contained in
$\mathcal S_{k-1,h-1}(q)$. As the point of $\PG(k-1,q)$ varies, the resulting
family of pairwise disjoint $(h-1)$-dimensional subspaces forms a so called
\emph{$(h-1)$-regulus} $\mathcal R_{h,q}$ of the Segre variety
$\mathcal S_{k-1,h-1}(q)$. Analogously, fixing a point of $\PG(h-1,q)$ and varying the point in
$\PG(k-1,q)$ yields a $(k-1)$-dimensional subspace of
$\mathcal S_{k-1,h-1}(q)$. These subspaces form another family, called a
\emph{$(k-1)$-regulus} $\cR_{k, q}$ of $\mathcal S_{k-1,h-1}(q)$. Distinct subspaces belonging to the same regulus are mutually disjoint, whereas any two
subspaces from different reguli intersect in exactly one point. Moreover, every
maximal subspace contained in $\mathcal S_{k-1,h-1}(q)$ belongs to exactly
one of these two reguli. For further details, we refer the reader to
\cite{harris2013algebraic,HirschfeldThas}.

\subsection{A preliminary result}

In this section, we investigate a higher-dimensional analogue of \cite[Theorem 1.5]{Rottey}. More precisely, we study subsets of points of 
$\PG(k-1,q^h)$ satisfying a closure property with respect to 
$q$-order subgeometries, and we show that such sets necessarily arise from 
a subgeometry over a subfield. In \cite{Rottey} the authors prove the following.

\begin{theorem}[see {\cite[Theorem 1.5]{Rottey}}] \label{rottey}
Assume that $q>2$. If $\cS$ is a set of at least three points of $\PG(1,q^h)$, such that any three points of $\cS$ determine a $q$-order subline entirely contained in $\cS$, then $\cS$ defines a $q^r$-order subline of $\PG(1, q^h)$ for
some divisor $r$ of $h$.    
\end{theorem}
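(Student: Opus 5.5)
Although Theorem~\ref{rottey} is quoted from \cite{Rottey}, here is how I would reprove it directly. Since $\PGL(2,q^h)$ is sharply $3$-transitive and preserves $q$-order sublines, I normalize so that $\cS$ contains $\infty=(1,0)$, $0=(0,1)$ and $1=(1,1)$. In the affine coordinate $x\mapsto(x,1)$, the $q$-order sublines through $\infty$ are exactly the sets $\{a+\lambda b:\lambda\in\F_q\}\cup\{\infty\}$ with $b\neq 0$. The $q$-sublines through $0,1,\infty$ are the images of $\PG(1,q)$ under the maps $x\mapsto ax$. Put $T:=\{x\in\F_{q^h}: (x,1)\in\cS\}$. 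The hypothesis then says: for any two distinct $a,b\in T$, the whole line $a+\F_q(b-a)$ lies in $T$. Also $0,1\in T$, so $\F_q\subseteq T$.

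\textbf{Step 1: $T$ is closed under addition.} Every affine $\F_q$-line through two points of $T$ lies in $T$. For $q>2$, a set with this property that contains $0$ is an $\F_q$-subspace. Take $a,b\in T$ with $a\neq b$. The line through $0$ and $a$ gives $\lambda a\in T$ for all $\lambda\in\F_q$, so $T$ is closed under $\F_q$-scalars. Next, choose $\lambda\in\F_q\setminus\{0,1\}$; this is where $q>2$ is used. The points $\lambda^{-1}a$ and $(1-\lambda)^{-1}b$ lie in $T$, and their line contains
\[
\lambda\cdot\lambda^{-1}a+(1-\lambda)(1-\lambda)^{-1}b=a+b .
\]
The degenerate cases, such as $\lambda^{-1}a=(1-\lambda)^{-1}b$, are handled by applying the same trick with a different choice of $\lambda$ or a different auxiliary point. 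Hence $T$ is an $\F_q$-subspace of $\F_{q^h}$.

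\textbf{Step 2: $T$ is closed under multiplication.} The point set $\cS$ is stable under every element of $\PGL(2,q^h)$ that maps three points of $\cS$ to three points of $\cS$. The reason is that the images under such a map of the $q$-sublines through triples of points of $\cS$ are again $q$-sublines through triples of points of $\cS$. However, this alone is not obviously enough, so I argue directly instead. For $t\in T\setminus\{0\}$, apply the hypothesis to the triple $0,\infty,t$: the subline determined by it is $t\F_q\cup\{\infty\}$, which is already known. The richer input comes from triples not containing $\infty$.

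Take $s,t\in T^{*}$ and consider the triple $\infty,s,t$, which again only yields lines. So use triples of affine points, say $0,1,t$. The subline through $0,1,t$ that does not pass through $\infty$ is the image of $\PG(1,q)$ under a Möbius map, and it contains the points $\frac{\mu t}{\mu t+\nu(1-t)}$ (up to normalization). Using the additive structure from Step 1, I want to deduce $t^{-1}\in T$; this is the standard approach of showing that $T\cup\{\infty\}$ is closed under $x\mapsto 1/x$. The map $x\mapsto 1/x$ sends the triple $0,\infty,1$ to $\infty,0,1$, so it should preserve the configuration. Concretely, I would check that the subline through $1$, $t$ and $\infty$ is mapped by $x\mapsto 1/x$ to the subline through $1$, $t^{-1}$ and $0$. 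That image must lie in $\cS$ as soon as I show $t^{-1}\in T$, so the argument risks being circular.

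Instead I would use the identity $t^{-1}=$ a point on the subline through $0,1,t$ combined with lines. Once closure under inverses is established, Hua's identity
\[
a-\bigl(a^{-1}+(b^{-1}-a)^{-1}\bigr)^{-1}=aba
\]
gives closure under $a\mapsto a^2$. Then polarization, using $2ab=(a+b)^2-a^2-b^2$ when $q$ is odd, or a variant of Hua's identity in characteristic $2$, gives closure under products. So $T$ is a subfield $\F_{q^r}$ with $r\mid h$, and $\cS$ is the corresponding $q^r$-subline.

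\textbf{Main obstacle.} The hard step is proving that $T$ is closed under inversion, that is, getting genuinely multiplicative information out of a single non-affine subline. I expect that this requires an explicit parametrization of the $q$-subline through $0,1,t$, followed by extracting $t^{-1}$ as a combination of points on it and on affine lines.
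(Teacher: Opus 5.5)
\textbf{Verdict: there is a genuine gap.} The paper gives no proof of this statement; it imports it from Rottey--Van de Voorde, so your argument has to stand on its own. Step~1 is fine: the degenerate case $\lambda^{-1}a=(1-\lambda)^{-1}b$ just means $b\in\F_q a$, so $a+b\in T$ anyway. The passage from ``$T$ is an $\F_q$-subspace closed under inversion'' to ``$T$ is a subfield'' via Hua's identity is also sound. But Step~2 never proves that $T\setminus\{0\}$ is closed under $x\mapsto x^{-1}$. You discard the invariance claim, which, as you note, is not justified by your argument and is essentially the conclusion itself. You then observe that the direct argument is circular, and you end with a plan rather than a proof. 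The non-affine sublines are the only multiplicative input in the hypothesis, so this step is the heart of the theorem, and as written the proposal is incomplete there. Your suggested route, ``$t^{-1}$ is a point on the subline through $0,1,t$'', cannot work literally. That subline is $\{0\}\cup\{t/((1-\lambda)t+\lambda):\lambda\in\F_q\}$. The condition $t/((1-\lambda)t+\lambda)=t^{-1}$ reads $(t-1)(t+\lambda)=0$, so $t^{-1}$ never lies on it when $t\notin\F_q$. Step~1 has to be combined with a shift.

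The missing step is short, using the parametrization $\frac{\mu s}{\mu s+\nu(1-s)}$ you already wrote down. Let $t\in T\setminus\F_q$ and pick $c\in\F_q\setminus\{0,-1\}$; this again uses $q>2$. Then $s=t-c\in T\setminus\F_q$, so $0,1,s$ are three distinct points of $\cS$. Taking $(\mu:\nu)=(1+c:c)$, the subline through them contains the point
\[
\frac{(1+c)s}{s+c}=(1+c)\bigl(1-ct^{-1}\bigr).
\]
By Step~1, $1-ct^{-1}\in T$, hence $ct^{-1}\in T$ and $t^{-1}\in T$; for $t\in\F_q^*$ this is trivial.

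You should also make the characteristic-$2$ ``variant'' explicit. Since the field is commutative, Hua's identity gives $a^2b\in T$ for all $a,b\in T$; the cases $a=0$, $b=0$, $ab=1$ are trivial. Squaring is injective, hence bijective, on the finite set $T$, so every element of $T$ is a square of an element of $T$. Hence $T$ is closed under products. Then $T$ is a subfield of $\F_{q^h}$ containing $\F_q$, so $T=\F_{q^r}$ with $r\mid h$, and $\cS=\{(x,1):x\in T\}\cup\{\infty\}$ is the $q^r$-subline.
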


A set of $k+1$ points of $\PG(k-1,q^h)$ is said to be in {\em general position} if no $k$ of them lie in a hyperplane. Note that $k+1$ points of $\PG(k-1, q^h)$ in general position determine a unique $q$-order subgeometry of $\PG(k-1, q^h)$. Here, we are interested in a generalization of Theorem \ref{rottey}. Precisely, we consider $\cS$ to be a set containing $k+1$ points of  
$\PG(k-1,q^h)$, with $q>2$, in general position, such that for any choice 
of $k+1$ points of $\mathcal S$ in general position, the unique $q$-order 
subgeometry determined by them is entirely contained in $\cS$.

The following lemmas collect some geometric properties that follow from
the assumptions on $\mathcal S$.

\begin{lemma} \label{lemma:retta1}
Let $\cS$ be a subset of points of $\PG(k-1,q^h)$ such that $\cS$ contains $k+1$ points in general position, and such that for any 
$k+1$ points of $\cS$ in general position, the unique $q$-order 
subgeometry they determine is entirely contained in $\cS$. If a line $\ell$ has at least two points in common with $\cS$, then $|\ell \cap \cS| \ge q+1$. Also, if $\ell_1$ and $\ell_2$ are two intersecting lines such that $|\cS \cap \ell_i| \ge 2$, $i = 1,2$, then $\ell_1 \cap \ell_2 \in \cS$. 
\end{lemma}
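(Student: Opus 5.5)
The plan is to reduce both claims to the general-position hypothesis by a careful choice of frames. Fix once and for all $k+1$ points $P_1,\dots,P_{k+1}$ of $\cS$ in general position, and let $\Sigma_0\subseteq\cS$ be the $q$-order subgeometry they determine.

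\textbf{First claim.} Let $A,B\in\ell\cap\cS$ be distinct. I want to complete $\{A,B\}$ to a frame of $k+1$ points of $\cS$ in general position. The $q$-order subgeometry determined by that frame contains $A$ and $B$, hence contains the $q$-subline of $\ell$ through them, which gives $q+1$ points of $\ell\cap\cS$.

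To build the frame I will work inside $\Sigma_0$, a copy of $\PG(k-1,q)$. Since $\Sigma_0$ spans $\PG(k-1,q^h)$, I can choose points $Q_3,\dots,Q_k\in\Sigma_0$ such that $A,B,Q_3,\dots,Q_k$ span $\PG(k-1,q^h)$. This works because at each step the current span is a proper subspace and cannot contain all of $\Sigma_0$. It remains to choose $Q_{k+1}\in\Sigma_0$ avoiding the $k$ hyperplanes spanned by $k-1$ of the points $A,B,Q_3,\dots,Q_k$. A hyperplane of $\PG(k-1,q^h)$ meets $\Sigma_0$ in at most a hyperplane of $\Sigma_0$, so it contains at most $(q^{k-1}-1)/(q-1)$ points of $\Sigma_0$.

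The main obstacle is the counting here, since $k$ hyperplanes of $\Sigma_0$ could cover $\Sigma_0$ when $k>q$. I would handle it by replacing $\Sigma_0$ with a larger subgeometry whenever one is available. Alternatively, I would exploit the freedom in choosing $Q_3,\dots,Q_k$ together with $Q_{k+1}$: pick $Q_{k+1}$ first, then choose the others generically so that the relevant hyperplanes are in general position relative to $\Sigma_0$. I would also use $q>2$, which guarantees that $\PG(k-1,q)$ contains frames avoiding prescribed hyperplane configurations. If needed, I would instead argue by induction on dimension, using subgeometries of lower-dimensional subspaces spanned by points of $\cS$.

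\textbf{Second claim.} Let $X=\ell_1\cap\ell_2$, choose $A_i,B_i\in\ell_i\cap\cS$, and consider the plane $\pi=\langle\ell_1,\ell_2\rangle$. By the first part, $\ell_i\cap\cS$ contains a $q$-subline through $A_i,B_i$. If $X$ is already one of the $A_i,B_i$ there is nothing to prove, so assume not.

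The four points $A_1,B_1,A_2,B_2$ are then in general position in $\pi$. I extend them to a frame of $\PG(k-1,q^h)$ by adding $k-3$ points of $\cS$ independent from $\pi$, chosen as above, ensuring general position overall. The resulting $q$-subgeometry contained in $\cS$ has its restriction to $\pi$ equal to the $q$-subplane determined by $A_1,B_1,A_2,B_2$. That subplane contains the lines $A_1B_1=\ell_1$ and $A_2B_2=\ell_2$ and hence their intersection $X$. So $X\in\cS$.
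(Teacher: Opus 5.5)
Your argument for the second claim fails as soon as $k\ge 4$. In a set of $k+1$ points of $\PG(k-1,q^h)$ in general position, any $k$ of the points are independent, so for $k\ge 4$ no four of them are coplanar. Concretely, delete one of the $k-3$ added points. The remaining $k$ points are $A_1,B_1,A_2,B_2$ and $k-4$ further points, and these span a subspace of dimension at most $2+(k-4)=k-2$, i.e.\ they lie in a hyperplane. So no choice of added points yields a frame containing all four points of $\pi$, and your argument only covers $k=3$, where nothing is added.

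The missing idea, which is how the paper proceeds, is to put only three of the four points into the frame and force the fourth into the subgeometry by intersecting subspaces.
\begin{itemize}
\item Take a $(k-4)$-dimensional subspace $\Lambda$ skew to $\pi$, spanned by points $T_1,\dots,T_{k-3}$ of $\Sigma_0$. Take $T_{k-2}\in\Lambda\cap\Sigma_0$ such that $T_1,\dots,T_{k-2}$ is a frame of $\Lambda$.
\item By the first claim there is $T'\in\cS$ on the line $\langle T_{k-2},B_2\rangle$ with $T'\neq T_{k-2},B_2$.
\item Then $T_1,\dots,T_{k-3},T',A_1,B_1,A_2$ are in general position, so their $q$-order subgeometry $\Sigma'$ lies in $\cS$.
\item Now $T_{k-2}=\Lambda\cap\langle T',A_1,B_1,A_2\rangle$ is a single point obtained by intersecting subspaces spanned by points of $\Sigma'$, hence lies in $\Sigma'$. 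By the same reasoning, so does $B_2=\langle T_{k-2},T'\rangle\cap\pi$.
\item Thus $\ell_1=\langle A_1,B_1\rangle$ and $\ell_2=\langle A_2,B_2\rangle$ are extended lines of $\Sigma'$, and $X\in\Sigma'\subseteq\cS$.
\end{itemize}

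For the first claim your strategy coincides with the paper's, which simply asserts that $\{A,B\}$ can be completed to a frame inside $\cS$. Your concern about the covering count is justified. None of the workarounds you list is carried out, so this step is also left open in your proposal. It can be closed in two stages, without counting and without $q>2$. Below, forms are $\F_q$-linear forms on $\F_q^k$, extended $\F_{q^h}$-linearly.
\begin{itemize}
\item Take coordinates with $\Sigma_0=\PG(k-1,q)$ and write $A=\langle a\rangle$. The forms with $\phi(a)=0$ form a proper subspace, so its complement contains a basis of forms. The dual basis gives points $W_1,\dots,W_k\in\Sigma_0$ such that $W_1,\dots,W_k,A$ is a frame. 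Its subgeometry $\Sigma_1$ lies in $\cS$ and contains $A$.
\item Next take coordinates with $\Sigma_1=\PG(k-1,q)$, $A=\langle e_1\rangle$ and $B=\langle b\rangle$. Since $B\neq A$, the forms vanishing at both $e_1$ and $b$ form a proper subspace of the forms vanishing at $e_1$. So you can choose a basis $\phi_2,\dots,\phi_k$ of the forms vanishing at $e_1$ with every $\phi_j(b)\neq0$.
\item Choose $\phi_1$ outside the union of the two proper subspaces $\{\phi:\phi(e_1)=0\}$ and $\{\phi:\phi(b)=0\}$; a vector space is never the union of two proper subspaces.
\item The dual basis consists of a multiple of $e_1$ and vectors $w_2,\dots,w_k\in\F_q^k$, and $b$ has all coordinates nonzero in it. Hence $A,\langle w_2\rangle,\dots,\langle w_k\rangle,B$ is a frame with all its points in $\cS$.
\end{itemize}
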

\begin{proof}
First, we prove that if a line $\ell$ has at least two points in common with 
$\mathcal S$, then $|\ell \cap \mathcal S| \ge q+1$. 
Let $P_1$ and $P_2$ be distinct points of $\mathcal S$, and let $\ell$ be the line 
of $\PG(k-1,q^h)$ through them. Observe that $\cS$ contains a $q$-order subgeometry of $\PG(k-1, q^h)$. Hence we can find  $P_3,\dots,P_{k+1}$ further $k-1$ points 
of $\mathcal S$ such that no $k$ points of $\{P_1,\dots,P_{k+1}\}$ lie in a 
hyperplane. Then the $q$-order subgeometry determined by $P_1,\dots,P_{k+1}$ 
contains $q+1$ points of $\ell$ and, by hypothesis, is entirely contained in 
$\mathcal S$. Hence, if a line $\ell$ has at least two points in common with 
$\mathcal S$, then $|\ell \cap \mathcal S| \ge q+1$, proving the first part of 
the assertion.

\noindent To prove the second part, let $\ell_1$ and $\ell_2$ be two intersecting lines such 
that $|\mathcal S \cap \ell_i| \ge 2$ for $i=1,2$. Let $P_1,P_2 \in \ell_1 \cap 
\mathcal S$ and let $Q_1,Q_2 \in \ell_2 \cap \mathcal S$. If one of the points 
$P_1,P_2$ lies on $\ell_2$, or one of the points $Q_1,Q_2$ lies on $\ell_1$, then 
$\ell_1 \cap \ell_2 \in \mathcal S$ and the assertion follows immediately. 
Therefore, we may assume that this is not the case.

\noindent Denote by $\theta_q$ the $q$-order subgeometry determined by $k+1$ points of 
$\mathcal S$ in general position; in particular, $\theta_q \subseteq \mathcal S$. 
Let $\Lambda$ be a $(k-4)$-dimensional subspace of $\PG(k-1,q^h)$ disjoint from 
$\langle \ell_1,\ell_2 \rangle_{q^h} \simeq \PG(2,q^h)$ such that 
$\Lambda \cap \theta_q \simeq \PG(k-4,q)$. Let $T_1,\dots,T_{k-2}$ be $k-2$ points 
of $\Lambda \cap \theta_q$ such that no $k-3$ of them lie in a $(k-5)$-dimensional subspace of 
$\Lambda$. Since $T_{k-2}$ and $Q_2$ belong to $\mathcal S$, by the first part of the proof 
the line $\langle T_{k-2},Q_2 \rangle_{q^h}$ contains at least $q+1$ points of 
$\mathcal S$. Hence, we may choose a point 
$T' \in \langle T_{k-2},Q_2 \rangle_{q^h} \cap \mathcal S \setminus 
\{T_{k-2},Q_2\}$. Consider the $k+1$ points
\[
T_1,\dots,T_{k-3},T',P_1,P_2,Q_1,
\]
which are in general position. Let $\theta_q'$ be the $q$-order subgeometry 
determined by these points. By hypothesis, $\theta_q' \subseteq \mathcal S$. Moreover, $\Lambda \cap \theta_q' \simeq \PG(k-4,q)$ and 
$\langle T',P_1,P_2,Q_1 \rangle_{q^h} \cap \theta_q' \simeq \PG(3,q)$; hence,
\[
\Lambda \cap \langle T',P_1,P_2,Q_1 \rangle_{q^h} = T_{k-2} \in \theta_q'.
\]
Similarly, 
$\langle T_{k-2},T' \rangle_{q^h} \cap \theta_q' \simeq \PG(1,q)$ and 
$\langle P_1,P_2,Q_1 \rangle_{q^h} \cap \theta_q' \simeq \PG(2,q)$, which implies
\[
\langle T_{k-2},T' \rangle_{q^h} \cap 
\langle P_1,P_2,Q_1 \rangle_{q^h} = Q_2 \in \theta_q'.
\]

\noindent Therefore, $\ell_1$ and $\ell_2$ are two extended lines of the subgeometry 
$\theta_q'$, and it follows that \[\ell_1 \cap \ell_2 \in \theta_q' \subseteq 
\mathcal S,\] completing the proof.
\end{proof}

\begin{lemma} \label{lemma:retta2}
Let $\cS$ be a subset of points of $\PG(k-1,q^h)$ such that $\cS$ contains $k+1$ points in general position, and such that for any 
$k+1$ points of $\cS$ in general position, the unique $q$-order 
subgeometry they determine is entirely contained in $\cS$. Then a line of $\PG(k-1, q^h)$ meets $\cS$ in $0$, $1$, or $q^r+1$ points, for some divisor $r$ of $h$.    
\end{lemma}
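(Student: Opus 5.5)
The plan is to reduce to Theorem~\ref{rottey}, applied to the set $\ell\cap\cS$ viewed as a subset of $\ell\simeq\PG(1,q^h)$. Let $\ell$ be a line with $|\ell\cap\cS|\ge 2$; the cases of $0$ or $1$ common points need no argument. By the first part of Lemma~\ref{lemma:retta1}, $|\ell\cap\cS|\ge q+1\ge 3$. It then suffices to show that any three distinct points $A,B,C\in\ell\cap\cS$ determine a $q$-order subline lying entirely in $\cS$. Theorem~\ref{rottey} (with the standing assumption $q>2$) then says that $\ell\cap\cS$ is a $q^r$-order subline for some divisor $r$ of $h$, so it has $q^r+1$ points.

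The key step is to build, given $A,B,C$ on $\ell$, a set of $k+1$ points of $\cS$ in general position whose $q$-subgeometry $\theta'_q$ meets $\ell$ in a subline containing $A,B,C$. Since the $q$-subline through three points of $\ell$ is unique, this subline is the one determined by $A,B,C$, and the hypothesis gives $\theta'_q\subseteq\cS$.

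For the construction, I would first choose $k-2$ further points $T_1,\dots,T_{k-2}$ of $\cS$ with $\langle \ell, T_1,\dots,T_{k-2}\rangle_{q^h}=\PG(k-1,q^h)$. These can be taken from the subgeometry $\theta_q\subseteq\cS$, which spans the whole space, by choosing them so that $\Lambda=\langle T_1,\dots,T_{k-2}\rangle_{q^h}$ is disjoint from $\ell$. Now $A,B,T_1,\dots,T_{k-2}$ are $k$ independent points, and I would take the $(k+1)$-st point $P$ to be a point of $\cS$ with $P\notin$ any hyperplane spanned by $k$ of these, chosen so that the subgeometry contains $C$.

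To arrange that $C$ lies in the subgeometry, use Lemma~\ref{lemma:retta1}. The line $\langle C,T_1\rangle_{q^h}$ meets $\cS$ in at least $q+1$ points, so we can pick $C'\in\langle C,T_1\rangle_{q^h}\cap\cS$ different from $C$ and $T_1$. Iterating this along a chain through $T_1,\dots,T_{k-2}$ gives a point $P\in\cS$ lying in ``generic'' position in the span, with $C$ recovered as an intersection of extended lines. More precisely, $C=\langle A,B\rangle_{q^h}\cap\langle P,\text{(point of }\Lambda)\rangle_{q^h}$, as in the proof of Lemma~\ref{lemma:retta1}. Then $A,B,T_1,\dots,T_{k-2},P$ are in general position, and the subgeometry they determine has $\ell$ and $\langle C,P\rangle$-type lines as extended lines. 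Hence $C\in\theta'_q$, and $\theta'_q\cap\ell$ is the $q$-subline through $A,B,C$.

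The main obstacle is this general-position bookkeeping: we must verify that $C$ really is a point of $\theta'_q$ (an intersection of two extended lines of $\theta'_q$) while keeping the $k+1$ chosen points in general position. Small $k$ also needs separate handling: for $k=2$ the statement is exactly Theorem~\ref{rottey}, and for $k=3$ the space $\Lambda$ is a single point. My first attempt would mimic the explicit configuration of Lemma~\ref{lemma:retta1}, using its second part to certify that intersection points of extended lines lie in $\cS$.
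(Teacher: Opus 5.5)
Your reduction to Theorem~\ref{rottey} is the same as the paper's, and your chain construction for the key step is sound. It is a minor variant of the paper's construction, which takes a frame $R_1,\dots,R_{k-1}$ of a $(k-3)$-space $\Gamma$ inside $\theta_q$ and replaces $R_{k-1}$ by a third point $R'\in\langle R,R_{k-1}\rangle_{q^h}\cap\cS$. The bookkeeping you worry about is just a coordinate check. Each step of the chain picks a point of $\cS$ different from the two points spanning the line, so the final point $P$ has all coordinates nonzero with respect to the basis $A,B,T_1,\dots,T_{k-2}$; this is general position. Writing $C=A+\lambda B$ and normalizing $P$ to the unit point, $C$ acquires coordinates $(1,1,0,\dots,0)$, so $C\in\theta'_q$. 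You do not need the second part of Lemma~\ref{lemma:retta1} here: that part certifies membership in $\cS$, not in $\theta'_q$.

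The genuine gap is that you stop at the per-line conclusion $|\ell\cap\cS|=q^{r_\ell}+1$, where $r_\ell$ may a priori depend on $\ell$. The lemma is meant with a single divisor $r$ for all lines meeting $\cS$ in at least two points. That is how it is used in the proof of Theorem~\ref{thm:general}, where the blocks are the sets $\ell\cap\cS$ of size $q^r+1$ for one fixed $r$. The paper devotes the second half of its proof to this, by induction on $k$, using a perspectivity between two secant lines through a point of $\cS$ off a hyperplane.

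You need an argument of this kind, and the second part of Lemma~\ref{lemma:retta1} gives a direct one. Let $\ell_1\neq\ell_2$ be coplanar lines, each meeting $\cS$ in at least two points; by Lemma~\ref{lemma:retta1} they meet in a point $X\in\cS$. Choose $Y_i\in(\ell_i\cap\cS)\setminus\{X\}$ and, by the first part of Lemma~\ref{lemma:retta1}, a third point $Q\in\langle Y_1,Y_2\rangle_{q^h}\cap\cS$; then $Q\notin\ell_1\cup\ell_2$. For each $Z\in\ell_1\cap\cS$, the line $\langle Q,Z\rangle_{q^h}$ meets $\ell_2$ in a point of $\cS$ by the second part of Lemma~\ref{lemma:retta1}. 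So projection from $Q$ injects $\ell_1\cap\cS$ into $\ell_2\cap\cS$, symmetrically the other way round, and hence $r_{\ell_1}=r_{\ell_2}$. For skew secant lines, compare both with the line joining a point of $\cS$ on each. With this step added, your proof is complete.
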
 
\begin{proof}
Denote by $\theta_q$ the $q$-order subgeometry determined by $k+1$ points of $\cS$ in general position. Hence, $\theta_q \subseteq \cS$. By Lemma \ref{lemma:retta1}, we know that if a line $\ell$ has at least two points in common with $\mathcal S$, then $|\ell \cap \mathcal S| \ge q+1$. We start by proving that the $q$-order subline determined by three distinct points 
$P_1$, $P_2$, and $R$ of $\ell \cap \mathcal S$ is entirely contained in $\ell \cap \mathcal S$. 
Let $\Gamma$ be a $(k-3)$-dimensional subspace of $\PG(k-1, q^h)$ disjoint from $\ell$ such that $\Gamma \cap \theta_q \simeq \PG(k-3, q)$. Let $R_1, \dots, R_{k-1}$ be a set of $k-1$ points of $\Gamma \cap \theta_q$ such that no $k-2$ of them are in a $(k-4)$-dimensional subspace of $\Gamma$. Consider a point $R'$ belonging to the line joining $R_{k-1}$ and $R$, with $R' \in \cS \setminus \{R, R_{k-1}\}$. Then $R_1, \dots, R_{k-2}, R', P_1, P_2$ are $k+1$ points of $\cS$ in general position, and $\theta_q' \subseteq \cS$, where $\theta_q'$ is the $q$-order subgeometry  determined by $R_1, \dots, R_{k-2}, R', P_1, P_2$. Moreover, $\Gamma \cap \theta_q' \simeq \PG(k-3, q)$, $\langle R', P_1, P_2 \rangle_{q^h} \cap \theta_q' \simeq \PG(2, q)$, and hence $\Gamma \cap \langle R', P_1, P_2 \rangle_{q^h} = R_{k-1} \in \theta_q'$. Similarly, $\langle R_{k-1}, R' \rangle_{q^h} \cap \theta_q' \simeq \PG(1, q)$, $\langle P_1, P_2 \rangle_{q^h} \cap \theta_q' \simeq \PG(1, q)$, and hence $\langle R_{k-1}, R' \rangle_{q^h} \cap \langle P_1, P_2 \rangle_{q^h} = R \in \theta_q'$. It follows that the $q$-order subline determined by $P_1, P_2, R$ is entirely contained in $\ell \cap \cS$. Therefore, if $\ell$ is a line such that $|\ell \cap \cS| \ge 2$, by Theorem~\ref{rottey}, there exists a divisor $r_\ell$ of $h$ such that $|\ell \cap \cS| = q^{r_\ell}+1$. 

Finally, we show by induction on $k$ that the integer $r_\ell$ does not depend on $\ell$.
If $k = 2$, then $\PG(k-1, q^h)$ is a line $\ell$ and $|\ell \cap \cS| = q^r+1$. Assume $k\ge 3$ and suppose, by induction, that in $\Lambda \simeq \PG(k-2,q^h)$ every line meeting
$\cS\cap \Lambda$ in at least two points meets it in exactly $q^{r}+1$ points, for some divisor $r$ of $h$.
Choose such a hyperplane $\Lambda$ with 
\[
|\cS\cap \Lambda|\ge \frac{q^{k-1}-1}{q-1},
\]
and fix a point $P\in \cS\setminus \Lambda$. Let $s_1$ and $s_2$ be distinct lines through $P$ such that $|s_i\cap \cS|\ge 2$, $i=1,2$. Then $|s_i \cap \cS| = q^{r_{s_i}} + 1$, for some divisors $r_{s_1}$ and $r_{s_2}$ of $h$. Set $X_i = s_i \cap \PG(k-2, q^h)$, $i = 1, 2$. Arguing as before, it can be seen that both $X_1$ and $X_2$ belong to $\cS$. Hence $\left|\langle X_1, X_2 \rangle_{q^h} \cap \cS\right| = q^r+1$. Let $Q$ be a point of $\cS$ on the line $\langle X_1, X_2 \rangle_{q^h}$ distinct from $X_1$ and $X_2$. Note that if $Y_1 \in s_1 \cap \cS$, then $Y_2 = \langle Q, Y_1 \rangle_{q^h} \cap s_2 \in \cS$, by Lemma \ref{lemma:retta1}. Hence the map 
\begin{align*}
Y_1 \in s_1 \cap \cS \mapsto \langle Q, Y_1 \rangle_{q^h} \cap s_2 \in s_2 \cap \cS    
\end{align*}
establishes a bijection between $s_1 \cap \cS$ and $s_2 \cap \cS$. This implies that there exists an integer $r_0$,
depending only on $P$, such that
\[
|s\cap \mathcal S| = q^{r_0}+1
\]
for every line $s$ through $P$ with $|s\cap \mathcal S|\ge 2$. Now let $\Lambda' \simeq \PG(k-2,q^h)$ be a hyperplane containing the point
$P$ and a point $Q' \in \Lambda \cap \cS$, with
\[
|\cS \cap \Lambda' | \ge \frac{q^{k-1}-1}{q-1}.
\] Let $P' \in (\Lambda \cap \cS) \setminus \Lambda'$.
By the previous argument, we get that the line $\langle P,Q'\rangle_{q^h}$ meets $\mathcal S$ in exactly
$q^{r_0}+1$ points, while the line $\langle P',Q'\rangle_{q^h}$, which is
contained in $\Lambda$, meets $\mathcal S$ in exactly $q^{r}+1$ points by the
induction hypothesis. Since both lines contain the point $Q' \in \mathcal S$,
it follows that
\[
q^{r_0}+1 = q^{r}+1,
\]
and hence $r_0 = r$. By repeating the same argument for each point $P \in \mathcal S \setminus \Lambda$, the assertion follows.
\end{proof}

Combining the previous lemmas, we obtain the following characterization,
which generalizes Theorem \ref{rottey} to $\PG(k-1,q^h)$.

\begin{theorem} \label{thm:general}
Assume that $q>2$. Let $\cS$ be a subset of points of $\PG(k-1,q^h)$ such that $\cS$ contains $k+1$ points in general position, and such that for any 
$k+1$ points of $\cS$ in general position, the unique $q$-order 
subgeometry they determine is entirely contained in $\cS$. Then $\cS$ defines a $q^r$-order subgeometry of $\PG(k-1, q^h)$ for
some divisor $r$ of $h$.    
\end{theorem}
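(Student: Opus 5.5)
The plan is to show that $\cS$ is a \emph{linear} set of points closed under joining with respect to $\F_{q^r}$, i.e. that $\cS$ is the point set of a subspace-like structure over $\F_{q^r}$ spanning $\PG(k-1,q^h)$, and then identify it with the $q^r$-order subgeometry determined by any $k+1$ of its points in general position. Fix the divisor $r$ given by Lemma~\ref{lemma:retta2}, so that every line meeting $\cS$ in at least two points meets it in exactly $q^r+1$ points. First we check that $\cS\cap\ell$ is a $q^r$-subline for such $\ell$: the proof of Lemma~\ref{lemma:retta2} shows that any three points of $\ell\cap\cS$ determine a $q$-subline inside $\ell\cap\cS$, so Theorem~\ref{rottey} applies on $\ell$ and $\ell\cap\cS$ is a $q^{r}$-order subline. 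In particular the "secant lines" of $\cS$ induce a linear space on $\cS$ in which every line has $q^r+1$ points.

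Next we verify the Veblen--Young (Pasch) axiom for this linear space using the second part of Lemma~\ref{lemma:retta1}: if two secant lines $\ell_1,\ell_2$ meet, their intersection lies in $\cS$; hence given a triangle $P,Q,R$ of $\cS$ and secant lines meeting two of its sides in points of $\cS$, the lines joining those points are secant and meet the third side, and the intersection point is in $\cS$. So $(\cS,\text{secant lines})$ is a projective space with lines of size $q^r+1$, and it is embedded in $\PG(k-1,q^h)$ with the property that its lines are full $q^r$-sublines of ambient lines. Since $\cS$ contains $k+1$ points in general position and every secant line of $\cS$ lies in the span, the dimension of this abstract projective space is exactly $k-1$ (it is at least $k-1$ as it spans, and at most $k-1$ since independent points in the abstract space remain independent in the ambient one, which I would check by induction on planes: a plane of $\cS$ spans a $\PG(2,q^h)$).

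Finally, instead of invoking a general embedding theorem, I would identify $\cS$ directly: let $\theta_{q^r}$ be the $q^r$-order subgeometry determined by $k+1$ points $P_1,\dots,P_{k+1}\in\cS$ in general position. Both $\cS$ and $\theta_{q^r}$ are generated, via iterated joins along secant lines, from $P_1,\dots,P_{k+1}$; a line through two points of both sets meets $\cS$ in a $q^r$-subline and $\theta_{q^r}$ in a $q^r$-subline, and these coincide once they share three points. Inductively on the dimension of the subspaces $\langle P_{i_1},\dots,P_{i_j}\rangle$ (using the unit point $P_{k+1}$ projected appropriately, as in the proofs of Lemmas~\ref{lemma:retta1}--\ref{lemma:retta2}, to get a third common point on each line), we get $\theta_{q^r}\subseteq\cS$; then counting, $|\cS|$ equals the number of points of a $(k-1)$-dimensional projective space of order $q^r$, which is $|\theta_{q^r}|$, giving equality. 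The main obstacle is this last identification step: ensuring that on each coordinate line the subline of $\cS$ and that of $\theta_{q^r}$ share three points, i.e. that the "frame" points propagate correctly; I would handle it by first proving the case $k=2$ (Theorem~\ref{rottey}) and $k=3$ via perspectivities through a point of $\cS$ as in the bijection argument of Lemma~\ref{lemma:retta2}, then inducting on $k$ through hyperplanes $\langle P_1,\dots,P_{k-1},\cdot\rangle$.
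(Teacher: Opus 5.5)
\paragraph{Where the gap is.} Your first two steps agree with the paper's proof. It too takes the sets $\ell\cap\cS$, for secant lines $\ell$, as blocks, gets the Veblen--Young axiom from Lemma~\ref{lemma:retta1}, notes that Desargues holds, and then cites Beutelspacher.

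The gap is in your last step. There you obtain $\cS\subseteq\theta_{q^r}$ by counting, using the claim that the abstract projective space $(\cS,\text{secants})$ has dimension at most $k-1$. That claim does not follow ``by induction on planes''. The plane case works because two coplanar secants meet. In the inductive step, however, you need the point where a secant $PZ$ meets $\langle H\rangle_{q^h}$ ($H$ an abstract hyperplane) to lie in $\cS$, and Lemma~\ref{lemma:retta1} says nothing about a line meeting a higher-dimensional span.

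In fact, none of the properties you have at that stage suffice. Take $k=4$, $h=5$ and
\[
L=\{\langle (x,x^q,x^{q^2},x^{q^3})\rangle_{q^5} : x\in\F_{q^5}^*\}\subset\PG(3,q^5).
\]
A hyperplane $a_0X_0+\dots+a_3X_3=0$ meets the underlying $5$-dimensional $\F_q$-space $U$ in the kernel of the nonzero $q$-polynomial $a_0x+a_1x^q+a_2x^{q^2}+a_3x^{q^3}$. So it meets $U$ in $\F_q$-dimension at most $3$. It follows that:
\begin{itemize}
\item $L$ is scattered;
\item every line meets $L$ in $0$, $1$ or $q+1$ points forming a $q$-subline;
\item two secant lines that meet span a plane meeting $U$ in dimension at most $3$, so they share a vector of $U$, i.e.\ they meet in a point of $L$.
\end{itemize}
Thus $L$ satisfies the conclusions of Lemmas~\ref{lemma:retta1} and~\ref{lemma:retta2}, Veblen--Young and Desargues. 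Yet with its secant lines it is $\PG(4,q)$ sitting inside $\PG(3,q^5)$, so your dimension claim fails for it. The bound must therefore come from the closure hypothesis itself, which $L$ violates. The paper's proof, which stops after the axioms and Desargues, does not address this point either, so the step has to be supplied in any case.

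\paragraph{A direct fix.} First prove $\theta_{q^r}\subseteq\cS$, where $\theta_{q^r}$ is the $q^r$-subgeometry containing $\theta_q$. Induct on subspaces $W$ of $\theta_q$, writing $\overline W$ for the $q^r$-closure of $W$.
\begin{itemize}
\item Let $X\in\overline W$ and $A\in W$. Pick a hyperplane $H_1$ of $W$ not through $A$, and set $Y_1=AX\cap\langle H_1\rangle_{q^h}$.
\item Pick a hyperplane $H_2$ of $W$ avoiding both $A$ and $Y_1$. This is possible because a projective space is not the union of two proper subspaces of its dual. Set $Y_2=AX\cap\langle H_2\rangle_{q^h}$.
\item By induction, $Y_i\in\overline{H_i}\subseteq\cS$. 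Unless $X=Y_1$, the $q^r$-sublines $AX\cap\cS$ and $AX\cap\overline W$ share the three points $A,Y_1,Y_2$, so they coincide and $X\in\cS$.
\end{itemize}
In particular, every line through two points of $\theta_{q^r}$ meets $\cS$ exactly in its $\theta_{q^r}$-subline.

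Now let $Z\in\cS$.
\begin{itemize}
\item The hyperplanes of $\theta_{q^r}$ whose extensions contain $Z$ form a proper subspace of the dual. Choose $k$ independent hyperplanes outside it, and let $Q_i=\langle q_i\rangle$ be the vertices of the simplex they bound. Then $Q_1,\dots,Q_k,Z$ are in general position, and we may take $Z=\langle q_1+\dots+q_k\rangle$.
\item By hypothesis, the $q$-subgeometry determined by $Q_1,\dots,Q_k,Z$ lies in $\cS$. Hence $V_j=\langle q_j+\dots+q_k\rangle\in\cS$ for every $j$.
\item Each $V_j$ lies on the line joining $Q_j$ and $V_{j+1}$. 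Descending induction from $V_k=Q_k$ gives $V_j\in\theta_{q^r}$ for all $j$, hence $Z=V_1\in\theta_{q^r}$.
\end{itemize}
This gives $\cS=\theta_{q^r}$ with no dimension count, and the Veblen--Young step becomes unnecessary.
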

\begin{proof}
Consider the incidence structure whose set of points is $\cS$, whose set of blocks is
\begin{align*}
    \cB = \left\{\ell \cap \cS \mid \ell \mbox{ line of } \PG(k-1, q^h), |\ell \cap \cS| = q^r+1\right\},
\end{align*}
and incidence is given by containment. Then $(\cS, \cB)$ satisfies the following properties:
\begin{itemize}
    \item[1)] For any two distinct points $P$ and $Q$  of $\cS$ there is exactly one block of $\cB$ incident with both $P$ and $Q$.
    \item[2)] Let $A, B, C, D$ be four points of $\cS$ such that the block containing $A$ and $B$ intersects the block containing $C$ and $D$. Then the block containing $A$ and $C$ also intersects the block containing $B$ and $D$. 
    \item[3)] Any block is incident with at least three points.
    \item[4)] There are at least two lines. 
\end{itemize}
Moreover, the theorem of Desargues holds in $(\cS, \cB)$. The statement follows, see for instance \cite{Beutelspacher}.
\end{proof}

\section{The geometric setting} \label{sec:geometricsetting}

This section is devoted to describing the geometric setting underlying our results. In this
framework, we investigate the interaction between Segre varieties and
Desarguesian spreads, and we introduce the notion of a generalized Segre
variety. We start by presenting a Desarguesian spread arising from a
matrix representation.

For a Segre variety $\cS_{k-1, h-1}(q)$ denote by $\cR_{k, q}$ and $\cR_{h, q}$ the $(k-1)$-regulus and the $(h-1)$-regulus of $\cS_{k-1, h-1}(q)$, respectively. Let $\overline{\cM}  = \cM_{h\times k}(\F_{q^h})$. Then $\PG(\overline{\cM} , q^h) = \PG(kh-1,q^h)$. Here we denote the rows of an element of $\overline{\cM} $ by vectors $v_1, \ldots, v_{h} \in \F_{q^h}^k$, and the vector $v^\sigma$ denotes the vector of $\F_{q^h}^k$ obtained by raising every entry of $v$ to the $q$-th power. Let $\psi$ denote the element of $\PGaL(kh, q^h)$ induced by
\begin{align*}
\begin{pmatrix}
v_1 \\
v_2 \\
\vdots \\
v_{h}
\end{pmatrix} \in \cM_{h \times k}(\F_{q^h})
\mapsto
\begin{pmatrix}
v_{h}^\sigma \\
v_1^\sigma \\
\vdots \\
v_{h-1}^\sigma
\end{pmatrix} \in \cM_{h \times k}(\F_{q^h}).
\end{align*}
Then $\psi$ is semilinear, has order $h$ and fixes pointwise the $q$-order subgeometry 
\begin{align*}
\Pi = \PG(\cM, q^h) \simeq \PG(kh-1, q), 
\end{align*}
where $\cM$ is the $kh$-dimensional $\F_q$-vector space of Moore matrices given by  
\begin{align*}
\cM = \left\{M_v \in \overline{\cM}  \mid v \in \F_{q^h}^k\right\}, \mbox{ with } 
M_v = 
\begin{pmatrix}
v \\
v^\sigma \\
\vdots \\
v^{\sigma^{h-1}} 
\end{pmatrix}.
\end{align*}
Let $\Theta$ denote the $(k-1)$-dimensional subspace of $\PG(kh-1, q^h)$ defined by the $k$-dimensional vector $\F_{q^h}$-subspace of $\overline{\cM} $ consisting of matrices whose only nonzero entries occur in the first row. Then $\Theta^{\psi^i}$ is defined by the $k$-dimensional vector $\F_{q^h}$-subspace of $\overline{\cM} $ consisting of matrices whose only nonzero entries occur in the $(i+1)$-th row, $i = 1, \dots, h-1$. Then $\Theta, \Theta^{\psi}, \ldots, \Theta^{\psi^{h-1}}$ span the whole $\PG(kh-1, q^h)$. Let $P$ be the point of $\Theta$ represented by 
\begin{align}
\begin{pmatrix}
v \\
\0 \\
\vdots \\
\0
\end{pmatrix}, \mbox{ where } v \in \F_{q^h}^k \setminus \{\0\}. \label{rep_P}
\end{align}
Then $\left\langle P, P^\psi, \dots, P^{\psi^{h-1}} \right\rangle_{q^h}$ is the $(h-1)$-dimensional subspace of $\PG(kh-1, q^h)$ defined by 
\begin{align}
\overline{\pi} _{v} = \left\{ \begin{pmatrix}
\alpha_1 v \\
\alpha_2 v^\sigma \\
\vdots \\
\alpha_h v^{\sigma^{h-1}}
\end{pmatrix} \mid \alpha_i \in \F_{q^h} \right\} = 
\left\{\diag(\alpha_1, \alpha_2, \dots, \alpha_{h}) M_v \mid  \alpha_i \in \F_{q^h} \right\} = 
\left\{\d M_v \mid \d \in \Delta\right\},  \label{rep_pi}
\end{align}
where $\Delta$ denotes the set of diagonal matrices whose entries are in $\F_{q^h}$. Since $\left\langle P, P^\psi, \dots, P^{\psi^{h-1}} \right\rangle_{q^h}$ is fixed by $\psi$, by \cite[Lemma 1]{lunardon1999normal}, it follows that $\Pi \cap \left\langle P, P^\psi, \dots, P^{\psi^{h-1}} \right\rangle_{q^h}$ is an $(h-1)$-dimensional subspace of $\Pi$. In particular, it is defined by 
\begin{align*}
\pi_v = \left\{ M_{\alpha v} \mid \alpha \in \F_{q^h} \setminus \{0\} \right\}.
\end{align*}
As $P$ varies in $\Theta$, we obtain the set $\cD_h$, which consists of $\frac{q^{hk} - 1}{q^{h} - 1}$ mutually disjoint $(h-1)$-dimensional subspaces of $\Pi$. Hence, $\mathcal D_h$ is an $(h-1)$-spread of $\Pi$ and, by construction, a
Desarguesian $(h-1)$-spread, which we denote by
$\mathcal D_h = \mathcal L(\Theta)$.
In this case, the $(k-1)$-dimensional subspaces
$\Theta, \Theta^{\psi}, \ldots, \Theta^{\psi^{h-1}}$
are the director spaces of $\mathcal D_h$.

If $v \in \F_{q}^k \setminus \{\0\}$, the point $P$ of $\Theta$ represented by~\eqref{rep_P} lies in the canonical $q$-order subgeometry of $\Theta$, say $\theta_q$. Then, varying $v \in \F_{q}^k \setminus \{\0\}$, we obtain a subset of $\cD_h$ consisting of $\frac{q^{k}-1}{q-1}$ elements that form the $(h-1)$-regulus $\cR_{h, q}$ of a Segre variety $\cS_{k-1, h-1}(q)$, see \cite[Theorem 2.4]{Lavrauw2015}. Hence, by construction, the following holds true.
\begin{lemma}\label{intersection}
The set of intersections of the extensions of elements of $\cR_{h, q}$ with $\Theta$ is the canonical $q$-order subgeometry $\theta_{q}$.
\end{lemma}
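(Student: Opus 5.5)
The plan is to unwind the definitions in the matrix model and check the equality of the two sets directly.

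By construction, an element of $\cR_{h,q}$ is a spread element $\pi_v = X(P)$, where $P$ is the point of $\Theta$ represented by~\eqref{rep_P} with $v \in \F_q^k\setminus\{\0\}$. Its extension over $\F_{q^h}$ is the $(h-1)$-dimensional subspace $\overline{\pi}_v = \langle \pi_v\rangle_{q^h}$.

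\emph{Step 1: the extension is $\overline{\pi}_v$.} The points $M_{\alpha v}$, $\alpha\in\F_{q^h}^*$, lie in $\overline{\pi}_v$, since $M_{\alpha v}=\diag(\alpha,\alpha^q,\dots,\alpha^{q^{h-1}})M_v$. They span it over $\F_{q^h}$: taking $\alpha$ to run over an $\F_q$-basis of $\F_{q^h}$, the diagonal matrices $\diag(\alpha,\dots,\alpha^{q^{h-1}})$ form the rows of a nonsingular Moore matrix. Hence they span all of $\Delta$, so $\langle \pi_v\rangle_{q^h}=\overline{\pi}_v$. This is also the content of~\cite[Lemma~1]{lunardon1999normal}.

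\emph{Step 2: compute $\overline{\pi}_v\cap\Theta$.} A matrix $\d M_v$ lies in $\Theta$ exactly when all its rows other than the first vanish. Since $v\neq \0$, each $v^{\sigma^i}\neq \0$, so this forces $\alpha_2=\dots=\alpha_h=0$. Therefore $\overline{\pi}_v\cap\Theta$ is the single point $P=\langle (v,\0,\dots,\0)\rangle$. The same computation works for arbitrary $v\in\F_{q^h}^k$, which shows that each extended spread element meets $\Theta$ in exactly the point that defines it.

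\emph{Step 3: conclude.} As $v$ ranges over $\F_q^k\setminus\{\0\}$, the points $\langle(v,\0,\dots,\0)\rangle_{q^h}$ range exactly over the canonical $q$-order subgeometry $\theta_q$ of $\Theta$. Hence the set of intersections is $\theta_q$.

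There is no real obstacle here; the statement holds by construction. The only point needing care is Step 1, namely that the $\F_{q^h}$-span of the $\F_q$-subspace $\pi_v$ is $\overline{\pi}_v$ rather than something smaller. This follows from the nonsingularity of the Moore matrix built from an $\F_q$-basis of $\F_{q^h}$.
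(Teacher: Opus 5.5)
Your proposal is correct and matches the paper, which states the lemma as holding by construction: $\cR_{h,q}$ is defined as the set of $\pi_v$ with $v\in\F_q^k\setminus\{\0\}$, and the extension $\overline{\pi}_v$ of $\pi_v$ meets $\Theta$ in the point \eqref{rep_P}. You simply make explicit the two checks the paper leaves implicit, namely that $\langle\pi_v\rangle_{q^h}=\overline{\pi}_v$ and that $\overline{\pi}_v\cap\Theta$ is that single point.
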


Recall that a Segre variety $\cS_{k-1, h-1}(q) \subset \PG(kh-1, q)$ is the intersection of certain quadrics of $\PG(kh-1, q)$, see \cite[Theorem 4.94]{HirschfeldThas}. Then the intersection of the same set of quadrics considered in $\PG(kh-1, q^s)$ is a Segre variety $\cS_{k-1, h-1}(q^s)$. Hence, $\cS_{k-1, h-1}(q)$ uniquely extends over $\F_{q^s}$ to $\cS_{k-1, h-1}(q^s)$. Therefore, the extension of the Segre variety $\cS_{k-1, h-1}(q)$ over $\F_{q^h}$, denoted by $\cS_{k-1,h-1}(q^h)$, consists of the matrices of $\overline{\cM}$ having rank one. Moreover, $\cS_{k-1, h-1}(q^h)$ is left invariant by $\psi$ and $\Pi \cap \cS_{k-1, h-1}(q^h) = \cS_{k-1, h-1}(q)$. Its point set is partitioned by the $(h-1)$-regulus $\cR_{h, q^h}$ containing the extensions of the elements of $\cR_{h, q}$. In what follows it is pointed out that $\Theta, \Theta^\psi, \dots, \Theta^{\psi^{h-1}} \in \cR_{k, q^h}$.

\begin{lemma}\label{subgeometry}
Let $\cQ$ be a quadric of $\PG(k-1, q^h)$ such that a $q$-order subgeometry of $\PG(k-1, q^h)$ is contained in $\cQ$, then $\PG(k-1, q^h) \subset \cQ$. 
\end{lemma}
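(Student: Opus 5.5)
Since every $q$-order subgeometry of $\PG(k-1,q^h)$ is projectively equivalent to the canonical one $\PG(k-1,q)$, and a collineation maps quadrics to quadrics, we may assume that the subgeometry is the canonical one. That is, every point with coordinates in $\F_q^k$ lies on $\cQ$. Write $\cQ$ as the zero set of a nonzero quadratic form
\[
Q(x)=\sum_{1\le i\le j\le k} a_{ij}x_ix_j, \qquad a_{ij}\in\F_{q^h}.
\]
The goal is to show that every $a_{ij}=0$, which is impossible for a genuine quadric. If degenerate forms are allowed, the conclusion is read as $Q\equiv 0$, i.e.\ $\PG(k-1,q^h)\subset\cQ$.

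The argument evaluates $Q$ at well-chosen $\F_q$-rational points. First, evaluating at the standard basis vector $e_i$ gives $Q(e_i)=a_{ii}=0$ for each $i$. Next, evaluating at $e_i+e_j$ for $i<j$ gives $Q(e_i+e_j)=a_{ii}+a_{jj}+a_{ij}=a_{ij}=0$. Hence all coefficients vanish and $Q$ is identically zero.

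This only uses the points $e_i$ and $e_i+e_j$, which lie in the subgeometry for every $q$. So no restriction such as $q>2$ is needed, and no argument about the number of zeros of a polynomial is required.

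The only real subtlety is the initial reduction. One must check that the coordinate change taking the given subgeometry to the canonical $\PG(k-1,q)$ is $\F_{q^h}$-linear, so that it sends the quadratic form to another quadratic form with coefficients in $\F_{q^h}$. This holds because all $q$-order subgeometries are $\PGL(k,q^h)$-equivalent: $k+1$ points in general position can be mapped by an element of $\PGL(k,q^h)$ to the standard frame, and the subgeometry they determine is then the canonical one.

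If one prefers to avoid the reduction, an alternative is a dimension count. The conditions $Q(P)=0$ for $P$ in the subgeometry are linear in the coefficients $a_{ij}$, and the $\binom{k+1}{2}$ points $\langle u_i\rangle$ and $\langle u_i+u_j\rangle$, where $u_1,\dots,u_k$ is an $\F_q$-basis of the underlying $\F_q$-space, already impose independent conditions. The computation above is exactly this statement in the canonical basis.
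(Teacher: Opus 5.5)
Your proof is correct and is essentially the paper's own argument: reduce to the canonical subgeometry $\PG(k-1,q)$ using the transitivity of $\PGL_k(q^h)$ on $q$-order subgeometries, then evaluate the form at $e_i$ and at $e_i+e_j$ to see that every coefficient vanishes. Your reading of the conclusion as ``the form is the null form'' also matches how the paper uses the lemma: in Lemma~\ref{contained} it is applied to the restrictions to $\Theta$ of the $2\times 2$ minors, which may well vanish identically.
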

\begin{proof}
Since the group $\PGL_k(q^h)$ acts transitively on its $q$-order subgeometries, we may assume that the $q$-order subgeometry of $\PG(k-1, q^h)$ contained in $\cQ$ is $\PG(k-1, q)$ in canonical position. Let $F = \sum_{i,j} a_{i j} X_i X_j$ be the quadratic form defining $\cQ$. Since the point having $1$ in the $i$-th position and zero elsewhere belongs to $\cQ$, then $a_{i i} = 0$. Similarly, the point having $1$ in the $i$-th and $j$-th positions, $i \ne j$, and zero elsewhere is in $\cQ$. Hence $F$ is the null form, as required.
\end{proof}

\begin{lemma}\label{contained}
The director spaces $\Theta, \Theta^{\psi}, \dots, \Theta^{\psi^{h-1}}$ of $\cD_h$ belong to $\mathcal R_{k,q^h}$.
\end{lemma}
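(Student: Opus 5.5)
Our plan is to show directly that each director space $\Theta^{\psi^i}$ is contained in the extended Segre variety $\cS_{k-1,h-1}(q^h)$, and that it is a maximal subspace of the right type. Lemma~\ref{subgeometry} suggests an alternative route via quadrics and canonical subgeometries. In the matrix model, however, the direct route is almost immediate. We recall from the discussion above that $\cS_{k-1,h-1}(q^h)$ is the set of points of $\PG(\overline{\cM},q^h)$ represented by rank-one $h\times k$ matrices. The $(k-1)$-regulus $\cR_{k,q^h}$ consists of the $(k-1)$-dimensional subspaces obtained by fixing the ``column'' factor in $\PG(h-1,q^h)$ and letting the row vector vary. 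Concretely, for $u\in\F_{q^h}^h\setminus\{\0\}$ this subspace is $\{u^{T}w : w\in \F_{q^h}^k\}$.

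First, we observe that $\Theta^{\psi^{i}}$, for $i=0,\dots,h-1$, is defined by the matrices whose only nonzero row is the $(i+1)$-th. Such a matrix is $e_{i+1}^{T}w$ with $w\in\F_{q^h}^k$, where $e_{i+1}$ is the standard basis vector of $\F_{q^h}^h$. Every nonzero such matrix has rank one, so $\Theta^{\psi^i}\subseteq \cS_{k-1,h-1}(q^h)$. Moreover, $\Theta^{\psi^i}$ is exactly the member of $\cR_{k,q^h}$ corresponding to the point $\langle e_{i+1}\rangle$ of $\PG(h-1,q^h)$. This is simply the Segre map image $\sigma_{k,h}$, read with the coordinate convention of $\overline{\cM}$ (rows indexed by the $h$-factor, columns by the $k$-factor). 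The only care needed is to match this convention with the lexicographic ordering used to define $\sigma_{k,h}$. Under the identification of $\overline{\cM}$ with $k\times h$ matrices by transposition, the roles are consistent: fixing the $\PG(h-1,q^h)$ factor gives a $(k-1)$-space.

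Second, for robustness we would also check the claim intrinsically, in case the Segre variety is meant as the one determined by $\cR_{h,q}$ inside $\Pi$. The extension of each element $\pi_v$ of $\cR_{h,q}$, with $v\in\F_q^k$, is $\overline{\pi}_v=\{\d M_v\}$. Since $v^{\sigma^j}=v$ for $v\in\F_q^k$, this equals $\{\alpha^{T} v:\alpha\in\F_{q^h}^h\}$, a set of rank-one matrices. These $(h-1)$-spaces therefore form the $(h-1)$-regulus $\cR_{h,q^h}$ of the rank-one variety. Now $\Theta^{\psi^i}$ meets each $\overline{\pi}_v$ in exactly one point, namely $e_{i+1}^{T}v$. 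By the regulus structure recalled in the preliminaries, a $(k-1)$-subspace contained in the variety and meeting every element of the $(h-1)$-regulus belongs to the other regulus. This confirms $\Theta^{\psi^i}\in\cR_{k,q^h}$.

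The main obstacle is essentially notational. We must verify that the variety $\cS_{k-1,h-1}(q^h)$ obtained by extending the quadrics of $\cS_{k-1,h-1}(q)$ (the one whose $(h-1)$-regulus is $\cR_{h,q}$ from \cite[Theorem 2.4]{Lavrauw2015}) really is the rank-one locus in the $\overline{\cM}$ coordinates, and not merely a projectively equivalent copy. The paper states this, so we take it as given. If needed, we would reprove it by noting that the $2\times 2$ minors vanish on all $M_v$ with $v\in\F_q^k$ of rank one, that is on $\Pi\cap\{\text{rank one}\}$. Lemma~\ref{subgeometry} then lets us pass from agreement on the $q$-subgeometry to agreement of the defining quadrics over $\F_{q^h}$.
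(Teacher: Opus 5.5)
Your proof is correct, but you reach the key containment $\Theta^{\psi^i}\subseteq\cS_{k-1,h-1}(q^h)$ by a different route than the paper.

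The paper does not use the coordinates of $\Theta$. It proceeds in three steps:
\begin{enumerate}
\item It writes $\cS_{k-1,h-1}(q^h)$ as an intersection of quadrics $\cQ_1,\dots,\cQ_m$ and observes that $\theta_q\subset\cQ_j$, since each point of $\theta_q$ lies on the extension of an element of $\cR_{h,q}$.
\item It applies Lemma~\ref{subgeometry} to the restriction of each $\cQ_j$ to $\Theta$, giving $\Theta\subseteq\cS_{k-1,h-1}(q^h)$, and transports this to $\Theta^{\psi^i}$ via the $\psi$-invariance of $\cS_{k-1,h-1}(q^h)$.
\item Each $\Theta^{\psi^i}$ meets every extended element of $\cR_{h,q}$ in one point, so it lies in the other family. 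This final step is exactly your second check.
\end{enumerate}

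You instead read both the containment and the family membership directly off the rank-one model. A matrix with a single nonzero row is $e_{i+1}^{T}w$, so $\Theta^{\psi^i}$ is literally the Segre image of $\{\langle e_{i+1}\rangle\}\times\PG(k-1,q^h)$.

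Your version is shorter and completely explicit. However, it rests entirely on identifying $\cS_{k-1,h-1}(q^h)$ with the rank-one locus in the $\overline{\cM}$-coordinates, which the paper asserts with only a brief justification. The paper's argument needs only that $\cS_{k-1,h-1}(q^h)$ is cut out by quadrics, contains $\theta_q$, and is $\psi$-stable. It is therefore coordinate-free, and it shows more generally that any $(k-1)$-space containing a $q$-subgeometry of points of $\cS_{k-1,h-1}(q^h)$ lies on it.

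If you want to make your closing remark rigorous, apply Lemma~\ref{subgeometry} in each factor separately. Pull a defining quadric back along $(w,u)\mapsto u^{T}w$. This gives a form of bidegree $(2,2)$ that vanishes for $w\in\F_q^k$ and for $u$ in the $q$-subgeometry $\{\langle(\alpha,\alpha^q,\dots,\alpha^{q^{h-1}})\rangle\}$ of $\PG(h-1,q^h)$. Applying the lemma first in $w$ and then in $u$ shows the form vanishes identically. This gives one inclusion, and equality follows because both sets are Segre varieties with the same number of points.
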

\begin{proof}
Since $\cS_{k-1, h-1}(q^h)$ is the intersection of certain quadrics, say  $\cQ_1, \dots, \cQ_{m}$, and $\theta_q \subset \cQ_i$, $i = 1, \dots, m$, it is enough to observe that $\Theta \subset \cQ_i$, $i = 1, \dots, m$, by Lemma~\ref{subgeometry}. Moreover, $\psi$ fixes $\cS_{k-1, h-1}(q^h)$ and hence $\Theta^{\psi^i} \subseteq \mathcal S_{k-1,h-1}(q^h)$, $i = 1, \dots, h-1$. By construction, every element of $\mathcal R_{h,q^h}$, that is the extension of
an element of $\mathcal R_{h,q}$, meets each of
$\Theta, \Theta^{\psi}, \dots, \Theta^{\psi^{h-1}}$ in exactly one point. Therefore $\Theta^{\psi^i} \in \cR_{k, q^h}$, $i = 0, \dots, h-1$.
\end{proof}

\subsection{The generalized Segre variety $\cS^r_{kr-1,h-1}(q)$}\label{sec:generalized-segre}

Let $r$ be a proper divisor of $h$ and set $h = rt$. If $v \in \F_{q^r}^k \setminus \{\0\}$, the point $P$ of $\Theta$ represented by~\eqref{rep_P} lies in the canonical $q^r$-order subgeometry of $\Theta$, say $\theta_{q^r}$. In this case, $\pi_v$ consists of 
\begin{align*}
M_{\alpha v} = 
\begin{pmatrix}
\alpha v \\
\alpha^q v^{\sigma} \\
\vdots \\
\alpha^{q^{r-1}} v^{\sigma^{r-1}} \\
\alpha^{q^{r}} v \\
\alpha^{q^{r+1}} v^{\sigma} \\
\vdots \\
\alpha^{q^{h-1}} v^{\sigma^{r-1}} 
\end{pmatrix}, \alpha \in \F_{q^h} \setminus \{0\}.
\end{align*}
Denote by $\cR^r_{h, q}$ the set of $\frac{q^{kr}-1}{q^r-1}$ members of $\cD_h$ defined by 
\begin{align*}
\pi_v, \mbox{ with } v \in \F_{q^r}^k \setminus \{\0\}.
\end{align*}
Let the {\em generalized Segre variety} $\cS^r_{kr-1,h-1}(q)$ be the set of points of $\Pi \simeq \PG(kh-1, q)$ lying in the union of the elements of $\cR^r_{h, q}$. In particular,
\begin{align*}
\left|\cS^r_{kr-1, h-1}(q)\right| = \frac{(q^h-1)(q^{kr}-1)}{(q-1)(q^r-1)}.
\end{align*}
Next, we show that the generalized Segre variety $\cS^r_{kr-1, h-1}(q)$, which has been defined from $\PG(k-1, q^r) \simeq \theta_{q^r} \subset \Theta \simeq \PG(k-1, q^h)$ by applying the field reduction $\PG(k-1,q^h) \mapsto \PG(kh-1, q)$, can also be obtained from $\cS_{k-1, t-1}(q^r) \subset \PG(kt-1, q^r)$ by applying the field reduction $\PG(kt-1,q^r) \mapsto \PG(kh-1, q)$. 
To this end, observe that $\psi^r$ is semilinear, has order $t$, and fixes pointwise the $q^r$-order subgeometry 
\begin{align*}
\Pi_r = \PG(\cM_r, q^r) \simeq \PG(kh-1, q^r), 
\end{align*}
where $\cM_r$ is the $kh$-dimensional $\F_{q^r}$-vector space 
\begin{align*}
\cM_r = \left\{M_{v_1, \dots, v_r} \in \overline{\cM}  \mid v_1, \dots, v_r \in \F_{q^h}^k\right\}, \mbox{ with } 
M_{v_1, \dots, v_r} = 
\begin{pmatrix}
v_1 \\
\vdots \\
v_r \\
v_1^{\sigma^r} \\
\vdots \\
v_r^{\sigma^r} \\
\vdots \\
v_1^{\sigma^{r(t-1)}} \\
\vdots \\
v_r^{\sigma^{r(t-1)}}  
\end{pmatrix}.
\end{align*}
In particular $\cM \subset \cM_r$ and hence $\Pi \subset \Pi_r$. Moreover, $\left\langle \Theta, \Theta^{\psi^r}, \dots, \Theta^{\psi^{r(t-1)}} \right\rangle_{q^h} \simeq \PG(kt-1, q^h)$ is fixed by $\psi^r$, from which, by \cite[Lemma 1]{lunardon1999normal}, we infer that 
\begin{align*}
\Pi_r \cap \left\langle \Theta, \Theta^{\psi^r}, \dots, \Theta^{\psi^{r(t-1)}} \right\rangle_{q^h} \simeq \PG(kt-1, q^r).
\end{align*}
More precisely, 
\begin{align*}
& \Theta_r, \Theta_r^\psi, \dots, \Theta_r^{\psi^{r-1}}, & \mbox{ where } \Theta_r = \Pi_r \cap \left\langle \Theta, \Theta^{\psi^r}, \dots, \Theta^{\psi^{r(t-1)}} \right\rangle_{q^h} \simeq \PG(kt-1, q^r), 
\end{align*}
are the director spaces of a Desarguesian $(r-1)$-spread of $\Pi$, say $\cD_{r}$. Recall that 
\begin{align*}
& \left\langle P, P^\psi, \dots, P^{\psi^{h-1}} \right\rangle_{q^h} \simeq \PG(h-1, q^h), & \mbox{ where } P \in \Theta, 
\end{align*}
are the elements of $\cD_h$ extended over $\F_{q^h}$. Then 
\begin{align*}
\Theta_r \cap  \left\langle P, P^\psi, \dots, P^{\psi^{h-1}} \right\rangle_{q^h} \simeq \PG(t-1, q^r).
\end{align*}
Hence $\Pi \cap \left\langle P, P^\psi, \dots, P^{\psi^{h-1}} \right\rangle_{q^h}$ is partitioned by members of $\cD_r$. On the other hand, $\psi^r$ stabilizes $\left\langle \Theta, \Theta^{\psi^r}, \dots, \Theta^{\psi^{r(t-1)}} \right\rangle_{q^h}$ and fixes pointwise $\Theta_r$; therefore, 
\begin{align*}
& \Theta, \Theta^{\psi^r}, \dots, \Theta^{\psi^{r(t-1)}}, 
\end{align*}
are the director spaces of a Desarguesian $(t-1)$-spread of $\Theta_r \simeq \PG(kt-1, q^r)$, say $\overline{\cD}_{t}$, where the members of $\overline{\cD}_t$ are given by 
\begin{align*}
& \Theta_r \cap \left\langle P, P^\psi, \dots, P^{\psi^{h-1}} \right\rangle_{q^h}, & \mbox{ where } P \in \Theta.
\end{align*}
Varying $P \in \theta_{q^r}$, we get a subset of $\overline{\cD}_t$ consisting of $\frac{q^{kr}-1}{q^r-1}$ elements forming the $(t-1)$-regulus $\cR_{t, q^r}$ of a Segre variety $\cS_{k-1, t-1}(q^r)$. Note that $\overline{\Gamma}$ the extension over $\F_{q^h}$ of an element $\Gamma \in \cR^r_{h, q}$ meets $\Theta_r$ in a member of $\cR_{t, q^r}$. Analogously, by considering the $(k-1)$-regulus $\cR_{k, q^r}$ of $\cS_{k-1, t-1}(q^r)$, we can deduce the existence of a set $\cR^r_{kr, q}$ consisting of $\frac{q^{h}-1}{q^r-1}$ mutually disjoint $(kr-1)$-dimensional subspaces of $\Pi$ partitioning $\cS^r_{kr-1, h-1}(q)$. For $\Gamma \in \cR^r_{h, q}$ 
\begin{align*}
\left\{\Gamma \cap \Gamma' \mid \Gamma' \in \cR^r_{kr, q}\right\}
\end{align*}
is a Desarguesian $(r-1)$-spread of $\Gamma$; similarly, for $\Lambda \in \cR^r_{kr, q}$
\begin{align*}
\left\{\Lambda \cap \Lambda' \mid \Lambda' \in \cR^r_{h, q}\right\}
\end{align*}
is a Desarguesian $(r-1)$-spread of $\Lambda$.

\subsection{A uniqueness result}\label{sec:unique}

Consider the $(h-1)$-regulus $\cR_{h, q}$ of a Segre variety $\cS_{k-1, h-1}(q)$ of $\Pi$. Let $\Xi$ denote a further $(h-1)$-dimensional subspace of $\Pi$ disjoint from $\cS_{k-1, h-1}(q)$ and $\overline{\Xi}$ its extension over $\F_{q^h}$. Here it is shown that, under the hypothesis that there is no $r$, proper divisor of $h$, such that $\theta_{q^r}$ contains $\overline{\Xi} \cap \Theta$ and $\theta_q$, there exists a unique Desarguesian $(h-1)$-spread $\cD_h = \cL(\Theta)$ of $\Pi$ containing both $\cR_{h, q}$ and $\Xi$.

\begin{lemma}\label{crucial}
The $(h-1)$-regulus $\cR_{h, q}$ of $\cS_{k-1, h-1}(q)$ is contained in $\cL(\Theta)$.
\end{lemma}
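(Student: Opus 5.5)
The statement is stated in the uniqueness setting, where $\cR_{h,q}$ is an arbitrary $(h-1)$-regulus of some Segre variety $\cS_{k-1,h-1}(q)$ in $\Pi$ and $\Theta$ is a director space to be attached to it. I read the lemma as follows: for the $\Theta$ naturally associated with this regulus, $\cR_{h,q}\subseteq\cL(\Theta)$. The first move is therefore to place the regulus in the matrix model. Since $\PGL(kh,q)$ acts transitively on Segre varieties $\cS_{k-1,h-1}(q)$ of $\Pi$, and hence on their $(h-1)$-reguli, I will apply a collineation of $\Pi$. This collineation extends to a $\F_{q^h}$-linear collineation of $\PG(kh-1,q^h)$ commuting with $\psi$. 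After applying it, I may assume that $\cR_{h,q}$ consists of the subspaces $\pi_v$ with $v\in\F_q^k\setminus\{\0\}$, which is the regulus exhibited just before Lemma~\ref{intersection}.

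With this normalisation the inclusion can be checked directly from the definitions. For $v\in\F_q^k\setminus\{\0\}$, let $P_v\in\Theta$ be the point represented by~\eqref{rep_P}. By~\eqref{rep_pi} the extension $\langle P_v,P_v^\psi,\dots,P_v^{\psi^{h-1}}\rangle_{q^h}$ is $\overline{\pi}_v$, and $\overline{\pi}_v\cap\Pi=\pi_v$. Hence $\pi_v=X(P_v)\in\cL(\Theta)$.

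A more intrinsic version, which I would present alongside, goes through the extended Segre variety. Extend $\cS_{k-1,h-1}(q)$ to $\cS_{k-1,h-1}(q^h)$, and let $\Theta$ be the member of $\cR_{k,q^h}$ provided by Lemma~\ref{contained}. Each element $\Gamma\in\cR_{h,q}$ extends to $\overline{\Gamma}\in\cR_{h,q^h}$. This extension meets each of $\Theta,\Theta^\psi,\dots,\Theta^{\psi^{h-1}}$ in exactly one point, because members of opposite reguli meet in a point. Write $P=\overline{\Gamma}\cap\Theta$. Since $\overline{\Gamma}$ is defined over $\F_q$, it is $\psi$-invariant, so $P^{\psi^i}=\overline{\Gamma}\cap\Theta^{\psi^i}$. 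These $h$ points lie in the $h$ director spaces, which are independent by~\eqref{eq:spanentiredesarguesian}, so they span the $(h-1)$-space $\overline{\Gamma}$. Hence $X^*(P)=\overline{\Gamma}$, and $\Gamma=\overline{\Gamma}\cap\Pi=X(P)\in\cL(\Theta)$. By Lemma~\ref{intersection}, the points $P$ obtained this way fill out $\theta_q$.

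The main obstacle is the normalisation step. I must make sure that the collineation $g$ realising transitivity on Segre varieties carries the director-space structure correctly, that is, that $g$ lifts so as to commute with $\psi$. Since $g$ has entries in $\F_q$ with respect to the $\psi$-fixed frame, its lift commutes with $\psi$. It follows that $\Theta^{g}$ is again a director space of a Desarguesian spread, and that $\cL(\Theta)^{g}=\cL(\Theta^{g})$. With this verified, the lemma reduces to the computation above.
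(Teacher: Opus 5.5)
Your ``intrinsic version'' is the paper's proof. The paper also takes $\Theta,\Theta^{\psi},\dots,\Theta^{\psi^{h-1}}\in\cR_{k,q^h}$ from Lemma~\ref{contained}, uses that every member of $\cR_{h,q^h}$ meets each of them in a point, and then concludes with a bare ``Hence''. Your extra step is exactly what that ``Hence'' hides, and it is correct: $\psi$-invariance of $\overline{\Gamma}$ gives $P^{\psi^i}=\overline{\Gamma}\cap\Theta^{\psi^i}$, independence of the director spaces forces $X^*(P)=\overline{\Gamma}$, and so $\Gamma=X(P)$. This argument uses only $\Theta\in\cR_{k,q^h}$ together with \eqref{eq:spanentiredesarguesian}, so it works for an arbitrary such $\Theta$. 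That is the form in which the lemma is invoked in Proposition~\ref{prop:point}, where $\Theta$ is ``the unique member of $\cR_{k,q^h}$ through $P$''. This argument should be your main proof.

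The normalisation route, which you present as primary, has a gap, and the obstacle you single out does not address it. The lemma concerns a pair $(\cR_{h,q},\Theta)$. If $g$ moves $\cR_{h,q}$ to the standard regulus $\{\pi_v : v\in\F_q^k\setminus\{\0\}\}$, it also moves $\Theta$ to $\Theta^g$. Nothing forces $\Theta^g$ to be the first-row space or one of its $\psi$-images, yet your computation $\pi_v=X(P_v)$ is valid only for that particular space. So this route proves the inclusion only for the one director space pulled back from the standard one. For the standard pair, the inclusion is simply how $\cR_{h,q}$ was constructed in the paper, namely as a subset of $\cL(\Theta)$. Knowing that $g$ commutes with $\psi$ gives $\cL(\Theta)^g=\cL(\Theta^g)$, but not $\Theta^g=\Theta$.

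To repair this route, you would need the stabiliser of $\cS_{k-1,h-1}(q)$ in $\PGL_{kh}(q)$ to be transitive on the members of $\cR_{k,q^h}$ whose $\psi$-conjugates span the whole space. This is true. Take coordinates in which $\Pi$ is the $\F_q$-rational point set and $\cS_{k-1,h-1}(q^h)$ consists of the rank-one matrices $xy^{T}$. Those members are $\{xy^{T}\}$ with $y$ fixed and having $\F_q$-linearly independent coordinates, and $\GL_h(q)$ acts regularly on such $y$. But it needs proof, and the intrinsic argument makes it unnecessary.
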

\begin{proof}
By Lemma \ref{contained}, we have that $\Theta,\Theta^\psi, \ldots, \Theta^{\psi^{h-1}}$ are elements of the system $\cR_{k, q^h}$ of $\cS_{k-1, h-1}(q^h)$. Then every member of $\cR_{h, q^h}$ has a point in common with $\Theta^{\psi^i}$, $i = 0, \dots, h-1$. Hence, the $(h-1)$-regulus $\cR_{h, q}$ of $\cS_{k-1, h-1}(q)$ is contained in $\cL(\Theta)$. 
\end{proof}

\begin{prop}\label{prop:point}
Let $\Xi$ be an $(h-1)$-dimensional subspace of $\Pi \simeq \PG(kh-1, q)$ disjoint from $\cS_{k-1, h-1}(q)$, and $\overline{\Xi}$ its extension in $\PG(kh-1, q^h)$. Then there exists a Desarguesian $(h-1)$-spread of $\Pi$ containing $\Xi$ and the $(h-1)$-regulus $\cR_{h, q}$ of $\cS_{k-1, h-1}(q)$ if and only if there exists a point $P$ of $\cS_{k-1, h-1}(q^h)$ such that $\overline{\Xi} = \left\langle P, P^\psi, \dots, P^{\psi^{h-1}} \right\rangle_{q^h}$. Moreover, $P, P^\psi, \dots, P^{\psi^{h-1}} \in \cS_{k-1, h-1}(q^h)$.
\end{prop}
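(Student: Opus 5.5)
We prove the two implications separately, working throughout in the fixed model of $\Pi$ and $\psi$. The key tool is that any Desarguesian spread containing $\cR_{h,q}$ has director spaces in $\cR_{k,q^h}$.

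\emph{Forward direction.} Suppose $\cD$ is a Desarguesian $(h-1)$-spread of $\Pi$ containing $\Xi$ and $\cR_{h,q}$. Write $\cD=\cL(\Theta')$ with director spaces $\Theta',\Theta'^{\psi},\dots,\Theta'^{\psi^{h-1}}$. Since $\Pi=\Fix(\psi)$ is a $q$-order subgeometry, every Desarguesian spread of $\Pi$ can be realised with respect to the same collineation $\psi$. The way to see this is to conjugate by an element of $\PGL(kh,q)$ extended to $\PGL(kh,q^h)$; such an element commutes with $\psi$.

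\emph{Step 1: the director spaces of $\cD$ lie in $\cS_{k-1,h-1}(q^h)$.} Each element $X\in\cR_{h,q}$ has extension $\overline X=\langle Q,Q^\psi,\dots,Q^{\psi^{h-1}}\rangle_{q^h}$, where $Q=\overline X\cap\Theta'$. So $\Theta'$ meets every extended element of $\cR_{h,q}$ in a point. By the analogue of Lemma~\ref{intersection}, these $\frac{q^k-1}{q-1}$ points form a $q$-order subgeometry of $\Theta'$. They lie on $\cS_{k-1,h-1}(q^h)$, since $\overline X\subset\cS_{k-1,h-1}(q^h)$. By Lemma~\ref{subgeometry}, applied to each of the defining quadrics $\cQ_1,\dots,\cQ_m$ restricted to $\Theta'$, we get $\Theta'\subset\cS_{k-1,h-1}(q^h)$. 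Since $\psi$ stabilises $\cS_{k-1,h-1}(q^h)$, the same holds for every $\Theta'^{\psi^i}$.

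\emph{Step 2: conclusion.} Now set $P=\overline\Xi\cap\Theta'$. This is a point because $\Xi\in\cL(\Theta')$. Then $\overline\Xi=\langle P,P^\psi,\dots,P^{\psi^{h-1}}\rangle_{q^h}$, and $P\in\Theta'\subset\cS_{k-1,h-1}(q^h)$, as required. The same Step 1 gives $P^{\psi^i}\in\cS_{k-1,h-1}(q^h)$, which is the ``moreover'' part. Alternatively it follows directly from $\psi$-invariance of $\cS_{k-1,h-1}(q^h)$, and this argument works in either direction of the equivalence.

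\emph{Converse.} Let $P\in\cS_{k-1,h-1}(q^h)$ with $\overline\Xi=\langle P,\dots,P^{\psi^{h-1}}\rangle_{q^h}$. Then $P$ lies on a unique element $\Theta'\in\cR_{k,q^h}$, the $(k-1)$-space of rank-one matrices with a fixed row space, here $\langle v\rangle$ where $P$ corresponds to $uv$. The plan is to show that $\cL(\Theta')$ is a Desarguesian spread containing both $\cR_{h,q}$ and $\Xi$, in three steps.

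\emph{Step (a): $\psi$ permutes $\cR_{k,q^h}$.} This holds because $\psi$ preserves $\cS_{k-1,h-1}(q^h)$ and maps maximal subspaces to maximal subspaces of the same dimension.

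\emph{Step (b): the spanning condition \eqref{eq:spanentiredesarguesian}, which is the main obstacle.} Concretely, $\Theta'$ consists of the matrices $u\,w$ with $u\in\F_{q^h}^h$ fixed and $w\in\F_{q^h}^k$ arbitrary. Applying $\psi$ gives the fixed column $u'=(u_h^q,u_1^q,\dots,u_{h-1}^q)^T$. The spaces $\Theta'^{\psi^i}$ span $\PG(kh-1,q^h)$ exactly when the $h$ vectors $u,u',u'',\dots$ are $\F_{q^h}$-independent. This is a Moore-type condition on $u$.

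\emph{Step (c): verifying the condition.} It follows from $\dim\overline\Xi=h-1$. Writing $P=u\,v$, the $h$ points $P^{\psi^i}$ are independent, and they have the form $u^{(i)}v^{(i)}$. If the columns $u^{(i)}$ were dependent, the span of $\overline\Xi$ restricted appropriately would drop dimension. A cleaner route is to note that $\overline\Xi$ independent forces the $h$ rank-one matrices $P^{\psi^i}$ to be independent, and then to check via the block structure that this implies independence of the $u^{(i)}$. I expect this to need care, possibly using that $\Xi$ is disjoint from $\cS_{k-1,h-1}(q)$ to exclude degenerate $u$ (for instance $u$ with entries in a proper subfield pattern).

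\emph{Finishing the converse.} Once \eqref{eq:spanentiredesarguesian} holds, $\cL(\Theta')$ is a Desarguesian spread and it contains $\Xi$ by construction. Finally, every member of $\cR_{h,q^h}$ meets each $\Theta'^{\psi^i}$ in a point, since subspaces from the two families meet in one point. So, exactly as in Lemma~\ref{crucial}, each extended element of $\cR_{h,q}$ equals $\langle Q,\dots,Q^{\psi^{h-1}}\rangle_{q^h}$ with $Q$ its intersection with $\Theta'$. Hence $\cR_{h,q}\subseteq\cL(\Theta')$.
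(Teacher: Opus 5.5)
The forward direction is fine, but the converse has a genuine gap at Step (c), and that gap cannot be closed from the stated hypotheses.

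\textbf{Forward direction.} This matches the paper. The paper obtains $\Theta^{\psi^i}\in\cR_{k,q^h}$ from Lemmas~\ref{contained} and~\ref{crucial}; you redo that argument for an arbitrary director space via Lemma~\ref{subgeometry}. Your remark that every Desarguesian spread of $\Pi$ can be written as $\cL(\Theta')$ for the same $\psi$ is correct and useful. Two small repairs are needed:
\begin{itemize}
\item The ``analogue of Lemma~\ref{intersection}'' for an arbitrary $\Theta'$ does not hold by construction and needs an argument. For example, the $k+1$ points of $\Theta'$ coming from $k+1$ members of $\cR_{h,q}$ in general position determine a $q$-subgeometry. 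Its spread elements form the regulus of a Segre variety (the standard subgeometry--regulus correspondence), and that variety equals $\cS_{k-1,h-1}(q)$ by the uniqueness in Lemma~\ref{lemma:arc}.
\item The member of $\cR_{k,q^h}$ through $P=uv$ is the set of rank-one matrices with fixed \emph{column} $u$, not fixed row space $\langle v\rangle$. You use the correct description in Step (b).
\end{itemize}

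\textbf{The gap in Step (c).} You leave this step open, and the route you suggest does not work. Independence of the matrices $P^{\psi^i}=u^{(i)}v^{\sigma^i}$ does not force independence of the columns $u^{(i)}$, because the row factors $v^{\sigma^i}$ can carry the independence. In fact, when $k\ge h\ge 3$ there is a counterexample.
\begin{itemize}
\item Let $y_1,\dots,y_h$ be an $\F_q$-basis of $\F_{q^h}$ and $v=(y_1,\dots,y_h,0,\dots,0)$. Take $\omega,\xi\in\F_{q^h}\setminus\F_q$, $f_1=(1,\dots,1)^T$, $f_2=(\omega,\omega^q,\dots,\omega^{q^{h-1}})^T$, and $u=f_1+\xi f_2$.
\item Both $f_i$ are fixed by your map $u\mapsto u'$. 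Hence $u^{(i)}=f_1+\xi^{q^i}f_2$, and every $u^{(i)}$ lies in $U=\langle f_1,f_2\rangle_{q^h}$.
\item The $P^{\psi^i}$ are independent: compare $f_1$-components and use that $v,\dots,v^{\sigma^{h-1}}$ are independent. So $\overline\Xi=\langle P,\dots,P^{\psi^{h-1}}\rangle_{q^h}$ is $(h-1)$-dimensional.
\item The points of $\Xi=\overline\Xi\cap\Pi$ are $f_1\mathrm{Tr}(\lambda v)+f_2\mathrm{Tr}(\lambda\xi v)$ for $\lambda\in\F_{q^h}^*$, with the trace taken entrywise.
\item These all have rank $2$, since $c_1\mathrm{Tr}(\lambda v)+c_2\mathrm{Tr}(\lambda\xi v)=\mathrm{Tr}(\lambda(c_1+c_2\xi)v)\neq\0$ for $(c_1,c_2)\in\F_q^2\setminus\{(0,0)\}$. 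Hence $\Xi$ is disjoint from $\cS_{k-1,h-1}(q)$.
\item Nevertheless every point of $\overline\Xi$ has column space inside the $2$-dimensional, $\psi$-compatible space $U$. So for the member $\Theta'$ of $\cR_{k,q^h}$ through any point of $\overline\Xi\cap\cS_{k-1,h-1}(q^h)$, all the $\Theta'^{\psi^i}$ stay inside the matrices with columns in $U$, and \eqref{eq:spanentiredesarguesian} fails.
\item Since any Desarguesian spread containing $\cR_{h,q}$ has a director space in $\cR_{k,q^h}$ meeting $\overline\Xi$, no Desarguesian spread contains both $\Xi$ and $\cR_{h,q}$.
\end{itemize}

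The paper's own proof of the converse skips the same verification: it asserts that $\Xi$ belongs to $\cL(\Theta)$ ``by construction'' without checking \eqref{eq:spanentiredesarguesian}. Your hunch that disjointness from $\cS_{k-1,h-1}(q)$ forces nondegenerate $u$ is correct only when $k=2$ or $h=2$. A correct converse must assume that $u,u',\dots,u^{(h-1)}$ span $\F_{q^h}^h$, equivalently that the member of $\cR_{k,q^h}$ through $P$ satisfies \eqref{eq:spanentiredesarguesian}. With that assumption, your Step (a) and your final paragraph complete the argument.
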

\begin{proof}
If $\cL(\Theta)$ is a Desarguesian $(h-1)$-spread of $\Pi$ containing $\cR_{h, q}$, then by Lemma~\ref{crucial}, $\Theta^{\psi^i} \in \cR_{k, q^h}$. Hence, if $\Xi \in \cL(\Theta)$, let $P^{\psi^i} = \overline{\Xi} \cap \Theta^{\psi^i}$, $i = 0, \dots, h-1$. In particular, $\overline{\Xi}$ is spanned by $P, P^\psi, \dots, P^{\psi^{h-1}}$ and $P^{\psi^i} \in \overline{\Xi} \cap \cS_{k-1, h-1}(q^h)$, $i = 0, \dots, h-1$. Viceversa, if $P \in \cS_{k-1, h-1}(q^h)$, let $\Theta$ be the unique member of $\cR_{k, q^h}$ through $P$. Then $\Theta^{\psi^i} \in \cR_{k, q^h}$, $i = 0, \dots, h-1$. Observe that $\Xi$ belongs to $\cL(\Theta)$ by construction, and the system $\cR_{h, q}$ of $\cS_{k-1, h-1}(q)$ is contained in $\cL(\Theta)$ by Lemma~\ref{crucial}.
\end{proof}

\begin{cor}\label{cor:point}
Let $\Xi$ be an $(h-1)$-dimensional subspace of $\Pi \simeq \PG(kh-1, q)$ disjoint from $\cS_{k-1, h-1}(q)$, and $\overline{\Xi}$ its extension in $\PG(kh-1, q^h)$. Let $\cL(\Theta)$ be a Desarguesian $(h-1)$-spread of $\Pi$ such that $\Xi \in \cL(\Theta)$ and $\cR_{h, q} \subset \cL(\Theta)$. Let $P$ denote the point $\Theta \cap \overline{\Xi}$. Then $\cL(\Theta)$ is the unique Desarguesian $(h-1)$-spread of $\Pi$ containing $\Xi$ and $\cR_{h, q}$ if and only if $\overline{\Xi} \cap \cS_{k-1, h-1}(q^h) = \left\{P, P^\psi, \dots, P^{\psi^{h-1}} \right\}$.
\end{cor}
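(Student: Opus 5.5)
The plan is to use Proposition~\ref{prop:point} in both directions, together with the fact that a Desarguesian spread containing $\cR_{h,q}$ is determined by its director spaces. By Lemma~\ref{crucial} and the proof of Proposition~\ref{prop:point}, these director spaces form a $\psi$-orbit of members of $\cR_{k,q^h}$. The key observation is a bijection between two kinds of objects. On one side are the Desarguesian $(h-1)$-spreads $\cL(\Theta')$ containing $\Xi$ and $\cR_{h,q}$. On the other side are the $\psi$-orbits $\{Q,Q^\psi,\dots,Q^{\psi^{h-1}}\}$ of points $Q\in\overline{\Xi}\cap\cS_{k-1,h-1}(q^h)$ satisfying $\overline{\Xi}=\langle Q,\dots,Q^{\psi^{h-1}}\rangle_{q^h}$. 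The map sends $\cL(\Theta')$ to the orbit of $\Theta'\cap\overline\Xi$. In the other direction, an orbit goes to $\cL(\Theta')$, where $\Theta'$ is the unique member of $\cR_{k,q^h}$ through $Q$. Uniqueness of the spread is then equivalent to there being exactly one such orbit.

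\emph{($\Leftarrow$)} Assume $\overline\Xi\cap\cS_{k-1,h-1}(q^h)=\{P,\dots,P^{\psi^{h-1}}\}$, and let $\cL(\Theta')$ be any Desarguesian spread containing $\Xi$ and $\cR_{h,q}$.
\begin{itemize}
\item By the proof of Proposition~\ref{prop:point}, $\Theta'\in\cR_{k,q^h}$ and $P'=\Theta'\cap\overline\Xi$ is a point of $\overline\Xi\cap\cS_{k-1,h-1}(q^h)$.
\item Hence $P'=P^{\psi^i}$ for some $i$.
\item The members of $\cR_{k,q^h}$ are pairwise disjoint, so $\Theta'$ is the unique member through $P^{\psi^i}$. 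This member is $\Theta^{\psi^i}$, since $\psi$ permutes $\cR_{k,q^h}$ and $\Theta\ni P$.
\item Therefore $\cL(\Theta')=\cL(\Theta^{\psi^i})=\cL(\Theta)$.
\end{itemize}

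\emph{($\Rightarrow$)} Argue by contrapositive. Suppose $Q\in\overline\Xi\cap\cS_{k-1,h-1}(q^h)$ with $Q\notin\{P^{\psi^i}\}$, and let $\Theta'$ be the member of $\cR_{k,q^h}$ through $Q$.
\begin{itemize}
\item Since $Q\neq P^{\psi^i}$ for all $i$ and $\cR_{k,q^h}$ partitions $\cS_{k-1,h-1}(q^h)$, the space $\Theta'$ differs from every $\Theta^{\psi^i}$.
\item By the uniqueness of director spaces, $\cL(\Theta')\neq\cL(\Theta)$ as soon as $\cL(\Theta')$ is a genuine Desarguesian spread containing $\Xi$.
\item The remaining task is to verify that $\cL(\Theta')$ is such a spread. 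This needs $\langle\Theta',\dots,\Theta'^{\psi^{h-1}}\rangle=\PG(kh-1,q^h)$ and $\overline\Xi=\langle Q,\dots,Q^{\psi^{h-1}}\rangle$.
\end{itemize}

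This verification is the main obstacle. I would use the Segre structure. Write $Q=a\otimes b$ with $a\in\F_{q^h}^h$ and $b\in\F_{q^h}^k$, as a rank-one $h\times k$ matrix. Every point of $\overline\Xi\cap\cS_{k-1,h-1}(q^h)$, in particular each $P^{\psi^i}$, lies in $\overline\Xi\subseteq\langle\Theta^{\psi^j}\rangle$-coordinates and meets each member of $\cR_{h,q^h}$ in one point. Use the canonical form of $\Xi$: its points are the Moore matrices $M_{\alpha v}$, so $\overline\Xi=\{\d M_v\}$ with $v$ having entries spanning $\F_{q^h}$ over $\F_q$ (this spanning holds because $\Xi$ is disjoint from $\cS_{k-1,h-1}(q)$, so $v$ is not proportional to an $\F_q$-vector). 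The rank-one matrices in $\overline\Xi$ are exactly $\d M_v$ with $\d$ of rank one, i.e.\ the $h$ points $e_iv^{\sigma^{i-1}}$. This shows directly that $\overline\Xi\cap\cS_{k-1,h-1}(q^h)$ consists of $\psi$-images of $P$, provided $\psi$ and $\cR_{k,q^h}$ are fixed and, if needed, the rows $v^{\sigma^i}$ are pairwise non-proportional. If instead some $v^{\sigma^i}$ and $v^{\sigma^j}$ are proportional, then $v$ is proportional to a vector over a proper subfield $\F_{q^r}$. In that case $\overline\Xi$ contains further rank-one points, namely the combinations within rows sharing a common direction. Those points yield $\psi$-orbits spanning $\overline\Xi$, so alternative spreads exist, which gives the converse.

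So the plan is to compute $\overline\Xi\cap\cS_{k-1,h-1}(q^h)$ explicitly in the Moore-matrix model. I would check that the extra rank-one points give valid spreads whenever they exist, using the Section~\ref{sec:generalized-segre} construction with $\theta_{q^r}$ to guarantee the spanning condition. This is the hardest step to make rigorous.
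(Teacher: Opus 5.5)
Your route is the paper's. The ($\Leftarrow$) direction is correct: the member of $\cR_{k,q^h}$ through $\Theta'\cap\overline\Xi$ must be some $\Theta^{\psi^i}$, so $\cL(\Theta')=\cL(\Theta)$. For ($\Rightarrow$) the paper likewise takes an extra point $Q\in\overline\Xi\cap\cS_{k-1,h-1}(q^h)$ and the member $\Phi$ of $\cR_{k,q^h}$ through it, and simply asserts that $\cL(\Phi)$ is a Desarguesian spread containing $\Xi$ and $\cR_{h,q}$. You rightly flag this as the crux, but you leave it unproved. Moreover, the claim you propose to verify, that the extra rank-one points yield $\psi$-orbits spanning $\overline\Xi$ and hence valid spreads, is false for an arbitrary extra point.

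To see this, for a column $a=(a_1,\dots,a_h)^T$ put $\hat a=(a_h^q,a_1^q,\dots,a_{h-1}^q)^T$ and let $a^{(j)}$ be the $j$-th iterate of $a\mapsto\hat a$. Then $\psi$ maps the rank-one point $a\,b$ to $\hat a\,b^\sigma$. For $\Phi=\{a\,b : b\in\F_{q^h}^k\}$, the spaces $\Phi^{\psi^j}$ span $\PG(kh-1,q^h)$ if and only if $a,a^{(1)},\dots,a^{(h-1)}$ are linearly independent. Take $h=4$, $v\in\F_{q^2}^k$ not proportional to a vector over $\F_q$, $\Xi=\pi_v$, and $Q=\diag(\gamma,0,\gamma^{q^2},0)M_v=(\gamma,0,\gamma^{q^2},0)^T v$ with $\gamma\neq0$. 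Then $Q$ is a rank-one point of $\overline\Xi$ outside $\{P,\dots,P^{\psi^3}\}$. But $a^{(2)}=a$, so $Q^{\psi^2}=Q$ and $\Phi^{\psi^2}=\Phi$. Hence $\langle Q,\dots,Q^{\psi^3}\rangle_{q^4}$ is only a line, the $\Phi^{\psi^j}$ span only a $(2k-1)$-space, and $\cL(\Phi)$ is not a Desarguesian spread at all.

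What is missing is a \emph{choice} of $Q$. Write $v=\beta v'$ with $v'\in\F_{q^r}^k$, $r$ minimal, $h=rt$ and $t>1$. As your computation indicates, $\overline\Xi$ is the direct sum of the subspaces $U_c=\{a\,(v')^{\sigma^{c-1}} : a_i=0 \mbox{ for } i\not\equiv c \pmod r\}$, $c=1,\dots,r$, each contained in $\cS_{k-1,h-1}(q^h)$. For $Q=a\,v'\in U_1$, two conditions are each equivalent to the linear independence of $a,a^{(r)},\dots,a^{(r(t-1))}$: the equality $\langle Q,\dots,Q^{\psi^{h-1}}\rangle_{q^h}=\overline\Xi$, and the spanning condition for the $\Phi^{\psi^j}$.

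Choose $a=e_1+xe_{1+r}$. The relevant $t\times t$ determinant is $1-(-1)^t\N_{q^h|q^r}(x)$. It is nonzero for a suitable $x\neq0$, because the norm maps $\F_{q^h}^*$ onto $\F_{q^r}^*$, which has more than one element. This $a$ is not proportional to any $e_j$, so $\Phi\neq\Theta^{\psi^i}$ for all $i$. Thus $\cL(\Phi)$ is a second Desarguesian spread containing $\Xi$ and $\cR_{h,q}$.

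Some existence argument of this kind is needed to finish ($\Rightarrow$). Trying to prove it for every extra $Q$, as your plan (and the paper's wording) does, cannot succeed.
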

\begin{proof}
Let $\cL(\Phi)$ be a further Desarguesian $(h-1)$-spread of $\Pi$ such that $\Xi \in \cL(\Phi)$ and $\cR_{h, q}~\subset~\cL(\Phi)$. Then $\Phi \in \cR_{k, q^h}$ and $\Phi \ne \Theta^{\psi^i}$, $i = 0, \dots, h-1$. Hence $\Phi \cap \Theta^{\psi^i} = \emptyset$, $i = 0, \dots, h-1$, and $Q = \overline{\Xi} \cap \Phi$ is a point of $\cS_{k-1, h-1}(q^h)$, with $Q \notin \left\{ P, P^{\psi}, \dots, P^{\psi^{h-1}}\right\}$. Similarly, if $Q$ is a point of $\overline{\Xi} \cap \cS_{k-1, h-1}(q^h)$, with $Q \notin \left\{ P, P^{\psi}, \dots, P^{\psi^{h-1}}\right\}$, let $\Phi$ be the unique member of $\cR_{k, q^h}$ through $Q$. Then $\cL(\Phi)$ is a Desarguesian $(h-1)$-spread of $\Pi$ such that $\Xi \in \cL(\Phi)$ and $\cR_{h, q}~\subset~\cL(\Phi)$. Moreover $\cL(\Phi) \ne \cL(\Theta)$.  
\end{proof}

We are now in a position to state and prove the main result of this subsection,
which provides a complete geometric characterization of when a Desarguesian
spread contains, besides the $(h-1)$-regulus of a Segre variety, an additional
$(h-1)$-dimensional subspace disjoint from it.

\begin{theorem}\label{thm:theta}
Let $\Xi$ be an $(h-1)$-dimensional subspace of $\Pi \simeq \PG(kh-1, q)$ disjoint from $\cS_{k-1, h-1}(q)$, and $\overline{\Xi}$ its extension in $\PG(kh-1, q^h)$. Let $\cL(\Theta)$ be a Desarguesian $(h-1)$-spread of $\Pi$ such that $\Xi \in \cL(\Theta)$ and $\cR_{h, q} \subset \cL(\Theta)$. Let $P$ denote the point $\Theta \cap \overline{\Xi}$. Then $\cL(\Theta)$ is the unique Desarguesian $(h-1)$-spread of $\Pi$ containing $\Xi$ and $\cR_{h, q}$ if and only if $P$ is not contained in a proper canonical subgeometry of $\Theta$ containing $\theta_q$.
\end{theorem}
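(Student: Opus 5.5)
The plan is to reduce the statement to Corollary~\ref{cor:point} and then carry out an explicit computation in the Moore matrix model of Section~\ref{sec:geometricsetting}. By Corollary~\ref{cor:point}, $\cL(\Theta)$ is the unique Desarguesian $(h-1)$-spread containing $\Xi$ and $\cR_{h,q}$ if and only if $\overline{\Xi}\cap\cS_{k-1,h-1}(q^h)=\{P,P^\psi,\dots,P^{\psi^{h-1}}\}$. So it suffices to prove the following: $\overline{\Xi}$ contains a rank-one point other than the $P^{\psi^i}$ if and only if $P$ lies in a canonical $q^r$-subgeometry $\theta_{q^r}$ of $\Theta$ for some proper divisor $r$ of $h$. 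Such a subgeometry automatically contains $\theta_q$.

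Since the setting is invariant under the relevant group, I may work with the canonical $\Theta$ and write $P$ as in~\eqref{rep_P}, with $v\in\F_{q^h}^k\setminus\{\0\}$. By~\eqref{rep_pi}, the points of $\overline{\Xi}$ are the matrices $\d M_v$ whose rows are $\alpha_i v^{\sigma^{i-1}}$. Since every $v^{\sigma^{i-1}}$ is nonzero, the nonzero rows of $\d M_v$ are exactly those with $\alpha_i\neq 0$. The matrix $\d M_v$ has rank one if and only if these nonzero rows are pairwise proportional. The points $P^{\psi^i}$ are precisely the matrices with a single nonzero row. Hence an extra rank-one point exists if and only if there are indices $i\neq j$ with $v^{\sigma^{i}}$ proportional to $v^{\sigma^{j}}$. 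Applying a suitable power of $\sigma$, this is equivalent to $v^{\sigma^s}=\lambda v$ for some $0<s<h$ and some $\lambda\in\F_{q^h}^*$.

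Next I show that this condition is equivalent to $P\in\theta_{q^r}$ with $r=\gcd(s,h)$, which is a proper divisor of $h$ because $0<s<h$. Suppose $v^{\sigma^s}=\lambda v$. Pick a coordinate $v_j\neq 0$ and rescale $v$ so that $v_j=1$; this does not change $P$. Comparing the $j$-th entries gives $\lambda=1$. Then every entry of $v$ is fixed by $x\mapsto x^{q^s}$ and lies in $\F_{q^h}$, so it lies in $\F_{q^s}\cap\F_{q^h}=\F_{q^r}$. Thus $P\in\theta_{q^r}$. Conversely, if after rescaling $v\in\F_{q^r}^k$ with $r$ a proper divisor of $h$, then $v^{\sigma^r}=v$. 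Taking $\alpha_1=\alpha_{r+1}=1$ and all other $\alpha_i=0$ gives a rank-one matrix in $\overline{\Xi}$ with two nonzero rows, hence a point different from every $P^{\psi^i}$. Combining the two directions with Corollary~\ref{cor:point} proves the theorem.

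The step needing the most care is the rank-one characterization of points of $\overline{\Xi}$ together with the identification of the $P^{\psi^i}$ as the one-row matrices. These are what make the correspondence with Corollary~\ref{cor:point} exact. The hypothesis that $\Xi$ is disjoint from $\cS_{k-1,h-1}(q)$ only guarantees that $v$ is not proportional to a vector of $\F_q^k$, i.e.\ $\Xi\notin\cR_{h,q}$, so the case $r=1$ coincides with $\theta_q$ itself and does not interfere. I expect the argument to go through without further obstacles.
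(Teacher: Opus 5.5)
Your proposal is correct and follows the paper's proof. Both reduce via Corollary~\ref{cor:point} to the rank-one points $\d M_v$ of $\overline{\Xi}$, observe that an extra one exists exactly when $v^{\sigma^s}\in\langle v\rangle_{q^h}$ for some $0<s<h$, and deduce $P\in\theta_{q^r}$.

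The differences are small improvements:
\begin{itemize}
\item Normalizing a coordinate to $v_j=1$ replaces the paper's norm and Hilbert~90 step.
\item Taking $r=\gcd(s,h)$ avoids the paper's slightly inaccurate claim that the support $\cI$ of a single rank-one point is a subgroup of the integers modulo $h$; only the set of all $i$ with $v^{\sigma^i}\in\langle v\rangle_{q^h}$ is.
\item You write out the converse direction, which the paper leaves implicit.
\end{itemize}
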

\begin{proof}
By Corollary \ref{cor:point}, $\cL(\Theta)$ is the unique Desarguesian $(h-1)$-spread of $\Pi$ containing $\Xi$ and $\cR_{h, q}$ if and only if $\overline{\Xi} \cap \cS_{k-1, h-1}(q^h) = \left\{P, P^\psi, \dots, P^{\psi^{h-1}} \right\}$, where $\overline{\Xi} = \left\langle P, P^{\psi}, \dots, P^{\psi^{h-1}} \right\rangle_{q^h}$ and $P$ is defined as in \eqref{rep_P}. By \eqref{rep_pi}, $\overline{\Xi}$ is defined by $\overline{\pi}_v$. Hence, if $Q \in \overline{\Xi} \cap \cS_{k-1, h-1}(q^h)$ there are $\alpha_i \in \F_{q^h}$, $i = 0, \dots, h-1$, such that $Q$ is represented by the matrix  
\begin{align*}
\diag(\alpha_1, \dots, \alpha_{h}) M_v    
\end{align*}
of rank one. 

Let $\cI= \{i \mid \alpha_i \ne 0\}$. Since $\psi$ fixes $\cS_{k-1, h-1}(q^h)$, we may assume, without loss of generality,  that $1 \in \cI$. Notice that since 
\begin{align*}
\rk\left(\diag(\alpha_1, \dots, \alpha_{h}) M_v\right) = 1
\end{align*}
we have that
\begin{align*}
i \in \cI \iff v^{\sigma^i}\in \langle v \rangle_{q^h}.
\end{align*}
On the other hand, if $v^{\sigma^i},v^{\sigma^j} \in \langle v \rangle_{q^h}$, then $v^{\sigma^{-i}},v^{\sigma^{i+j}}\in \langle v \rangle_{q^h}$.
Hence $\cI$ must correspond to an additive subgroup of the group of integers modulo $h$. Therefore $|\cI| = t$, for some divisor $t$ of $h$. If $t = 1$, then $Q \in \{P, P^\psi, \dots, P^{\psi^{h-1}}\}$ and $P \in \theta_q$. If $t > 1$ and hence $Q \notin \{P, P^\psi, \dots, P^{\psi^{h-1}}\}$, let $h = rt$. Then $v^{\sigma^r} = \alpha v$ for some $\alpha \in \F_{q^h}$ and hence 
\[ v = v^{\sigma^h} = (v^{\sigma^r})^{\sigma^{r(t-1)}} =  \left(\left((v^{\sigma^r})\right)^{\sigma^r}\dots\right)^{\sigma^r} = \alpha^{1+q^r+\cdots+q^{r(t-1)}}v = \N_{q^h|q^r}(\alpha) v,
\]
where $\N_{q^m|q^r}$ denotes the field norm function from $\F_{q^h}$ to $\F_{q^r}$. It follows that $\N_{q^h|q^r}(\alpha)=1$, and so there exists (by Hilbert's Theorem 90) some $\beta \in \F_{q^h}$ such that $\alpha = \beta^{q^r-1}$. Let $v'=\beta^{-1} v$. Then 
\begin{align*}
(v')^{\sigma^r} = \beta^{-q^r} \alpha v = \beta^{1-q^r} \alpha v'=v'.
\end{align*}
Therefore, the $k$ entries of $v'$ lie in $\F_{q^r}$ and $v= \beta v'$. Hence, the point $P \in \Theta$ represented by \eqref{rep_P} lies in $\theta_{q^r}$, the canonical $q^r$-order subgeometry of $\Theta$. 
\end{proof}

\section{The intersection of two Desarguesian spreads} \label{sec:intersection}

In this section, using the geometric setting and the results established in the
previous section, we are able to determine how two Desarguesian spreads
intersect.

Consider the field $\F_{q^{kh}}$, regarded as a vector space of dimension $kh$ over $\F_q$. For each $\alpha \in \F_{q^{kh}}^*$, let $g_\alpha \in \GL_{kh}(q)$ be given by $g_\alpha: x \in \F_{q^{kh}} \mapsto \alpha x \in \F_{q^{kh}}$. Then 
\begin{align*}
\tilde{G} = \left\{g_{\alpha} \in \GL_{kh}(q) \mid \alpha \in \F_{q^{kh}}^*\right\}
\end{align*} 
is a cyclic subgroup of $\GL_{kh}(q)$ isomorphic to $\F_{q^{kh}}^*$, and $\tilde{G}$ is regular on the vectors of $\F_{q^{kh}}^*$. The image $G$ in $\PGL_{kh}(q)$ of a conjugate of $\tilde{G}$ in $\GL_{kh}(q)$ is called Singer group of $\PG(kh-1, q)$. The following results follow from \cite[Corollary 2]{Jones} and \cite[Theorem 7.3]{Huppert}.

\begin{prop}
\begin{itemize}
\item A cyclic regular subgroup of $\PGL_{kh}(q)$ is a Singer group of $\PG(kh-1, q)$. 
\item These cyclic regular subgroups form a single conjugacy class in $\PGL_{kh}(q)$ consisting of the Singer groups of $\PG(kh-1, q)$. 
\item The normalizer of a Singer group in $\PGL_{kh}(q)$ has order $kh \frac{q^{kh}-1}{q-1}$.
\end{itemize}
\end{prop}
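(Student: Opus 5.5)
The three claims are quoted from \cite[Corollary 2]{Jones} and \cite[Theorem 7.3]{Huppert}; the plan below is a self-contained argument for them. Let $V=\F_q^{kh}$ and $N=\frac{q^{kh}-1}{q-1}$, the number of points of $\PG(kh-1,q)$. All three items will come from lifting a cyclic regular subgroup of $\PGL_{kh}(q)$ to $\GL_{kh}(q)$ and recognising the lift as the multiplicative group of a field $\F_{q^{kh}}$ acting on itself. So I first treat an arbitrary cyclic regular subgroup $C\le\PGL_{kh}(q)$, then compute the normalizer of the standard one.

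\emph{Step 1: the lift acts regularly on nonzero vectors.} Let $C$ be cyclic and regular on points, so $|C|=N$. Choose $g\in\GL_{kh}(q)$ mapping to a generator of $C$. Put $H=\langle g\rangle Z$, where $Z\simeq\F_q^*$ is the group of scalars. Then $H$ is abelian, since it is generated by $g$ and central elements, and $|H|=N(q-1)=q^{kh}-1$.

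Let $h\in H$ fix a nonzero vector $x$. Then the image of $h$ fixes the point $\langle x\rangle$. By regularity of $C$ on points, $h$ is a scalar, and a scalar fixing $x$ is the identity. Hence $H$ acts semiregularly on $V\setminus\{\0\}$. Since $|H|=|V\setminus\{\0\}|$, the action is regular.

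\emph{Step 2: $H$ is the multiplicative group of a field.} By Step 1, $H$ is transitive on nonzero vectors, so $V$ is an irreducible $\F_q H$-module. Let $A=\F_q[H]\subseteq \mathrm{End}_{\F_q}(V)$, which is commutative.
\begin{itemize}
\item Since $V$ is an irreducible module for $A$, Schur's lemma shows $\mathrm{End}_A(V)$ is a division algebra. As $A$ is commutative, $A\subseteq \mathrm{End}_A(V)$, so $A$ has no zero divisors. A finite commutative ring without zero divisors is a field, so $A$ is a field.
\item $V$ is then a vector space over $A$, so $|A|\le |V|=q^{kh}$.
\item On the other hand $H\subseteq A^*$, so $|A|-1\ge q^{kh}-1$.
\end{itemize}
Therefore $A\simeq\F_{q^{kh}}$, $A^*=H$, and $\dim_A V=1$.

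Fix $0\ne x\in V$. The map $A\to V$, $a\mapsto ax$, is then an $\F_q$-isomorphism carrying $H$ to the group $\tilde G$ of multiplications. So $H$ is conjugate in $\GL_{kh}(q)$ to $\tilde G$, and $C$, the image of $H$, is a Singer group. Since any two Singer groups are by definition images of conjugates of $\tilde G$, this proves the first two items simultaneously. The step I expect to need the most care is Step 1, namely passing from regularity on points to regularity of the lift on vectors; this is exactly where the scalars must be included in $H$.

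\emph{Step 3: the normalizer.} Take $G$ to be the image of $\tilde G$; its full preimage in $\GL_{kh}(q)$ is $\tilde G$, because $\tilde G$ contains the scalars. Consequently $N_{\PGL}(G)$ is the image of $N_{\GL}(\tilde G)$, so
\[
|N_{\PGL}(G)|=\frac{|N_{\GL}(\tilde G)|}{q-1}.
\]
It remains to show $|N_{\GL}(\tilde G)|=kh\,(q^{kh}-1)$.
\begin{itemize}
\item \emph{Lower bound.} The Frobenius map $x\mapsto x^q$ is $\F_q$-linear and conjugates $g_\alpha$ to $g_{\alpha^q}$. Together with $\tilde G$ it generates a subgroup of order $kh(q^{kh}-1)$ inside $N_{\GL}(\tilde G)$.
\item \emph{Upper bound.} Any $n\in N_{\GL}(\tilde G)$ normalizes the $\F_q$-span of $\tilde G$, which is the field $\F_{q^{kh}}$ acting by multiplication. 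Conjugation by $n$ therefore induces an $\F_q$-automorphism of this field, i.e. an element of $\mathrm{Gal}(\F_{q^{kh}}/\F_q)$, a group of order $kh$.
\item \emph{Kernel.} The kernel of this map is the centralizer of $\F_{q^{kh}}$ in $\mathrm{End}_{\F_q}(V)$. Since $V$ is one-dimensional over $\F_{q^{kh}}$, this centralizer is $\mathrm{End}_{\F_{q^{kh}}}(V)\simeq\F_{q^{kh}}$, whose invertible elements form $\tilde G$.
\end{itemize}
Hence $|N_{\GL}(\tilde G)|=kh\,(q^{kh}-1)$. This gives $|N_{\PGL}(G)|=kh\,\frac{q^{kh}-1}{q-1}$, and by the conjugacy in Step 2 the same order holds for every Singer group.
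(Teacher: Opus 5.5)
Your proof is correct, but it takes a different route from the paper. The paper gives no argument at all; it simply quotes the three items from \cite[Corollary 2]{Jones} and \cite[Theorem 7.3]{Huppert}. You replace these citations with a self-contained, elementary proof.

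The key move is putting the scalars into the lift, $H=\langle g\rangle Z$. This converts regularity of $C$ on points into regularity of $H$ on $V\setminus\{\0\}$. Schur's lemma then makes $A=\F_q[H]$ a field. Comparing $q^{kh}-1=|H|\le |A^*|$ with $|A|\le |V|$ forces $|A|=q^{kh}$ and $\dim_A V=1$, which gives items 1 and 2 at once.

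Your Step 3 is the content of Huppert's theorem, $N_{\GL_{kh}(q)}(\tilde G)=\tilde G\rtimes\langle x\mapsto x^q\rangle\simeq\Gamma\mathrm{L}(1,q^{kh})$. You combine it with the correct and necessary observation that $\tilde G$ is the full preimage of $G$, so the normalizer passes from $\GL_{kh}(q)$ to $\PGL_{kh}(q)$ with a factor $q-1$.

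The citation route is shorter and places the statement in the general theory of cyclic regular subgroups of primitive groups. Your route uses only linear algebra and counting, and makes the underlying field structure explicit.

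Two tacit points deserve one line each:
\begin{itemize}
\item The isomorphism $A\simeq\F_{q^{kh}}$ should be chosen as an isomorphism of $\F_q$-algebras, which is always possible. This makes the conjugating map $V\to\F_{q^{kh}}$ $\F_q$-linear, hence a conjugation inside $\GL_{kh}(q)$ rather than a semilinear one.
\item For item 2 you also need the converse inclusion: every Singer group is itself cyclic and regular. This is immediate, since $\tilde G$ is cyclic, transitive on nonzero vectors, and contains the scalars $g_\alpha$, $\alpha\in\F_q^*$. Its image therefore has order $\frac{q^{kh}-1}{q-1}$ and is regular on points.
\end{itemize}
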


Let $\tilde{H}$ be the subgroup of $\tilde{G}$ of order $q^h-1$, that is
\begin{align*}
\tilde{H} = \left\{g_{\alpha} \in \GL_{kh}(q) \mid \alpha \in \F_{q^{h}}^*\right\}.
\end{align*} 
The orbits of $\tilde{H}$ on the vectors of $\F_{q^{kh}}$ form a vector space partition $\tilde{\cD}_h$ of $\F_{q^{kh}}$, that is a set of $h$-dimensional vector subspaces such that every non-zero vector of $\F_{q^{kh}}$ is contained in a unique member of $\tilde{\cD}_h$, and $\tilde{G}/\tilde{H}$ acts regularly on $\tilde{\cD}_h$.
The image $H$ in $\PGL_{kh}(q)$ of a conjugate of $\tilde{H}$ in $\GL_{kh}(q)$ is called Singer subgroup of $\PG(kh-1, q)$ of order $\frac{q^h-1}{q-1}$. The set $\cD_{h}$, consisting of $(h-1)$-dimensional subspaces of $\PG(kh-1,q)$ whose underlying vector spaces are elements of $\tilde{\cD}_h$ form a Desarguesian $(h-1)$-spread of $\PG(kh-1, q)$. Moreover, $H$ leaves invariant every element of $\cD_h$. For more information see \cite{Drudge, VdV}.

A {\em pseudo-arc} of $\PG(kh-1,q)$ is a set $\cP_{h}$ of $(h-1)$-dimensional subspaces such that every subset of $k$ of them spans the entire space. When $h=1$, this definition reduces to the classical notion of arcs in projective
spaces. We recall the following fundamental bound on the size of a pseudo-arc.

\begin{theorem} [see \textnormal{\cite[Theorem 4.11]{Thas1971}}] Let $\cP_h$ be a pseudo-arc of $\PG(kh-1,q)$. We have
\begin{align*}
|\cP_h| \le 
\begin{cases}
q^h + k & \mbox{ if } q \mbox{ is even,} \\
q^h + k - 1 & \mbox{ if } q \mbox{ is odd.} \\
\end{cases}
\end{align*}
\end{theorem}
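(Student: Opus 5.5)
The plan is to reduce to the case $k=2$ by taking a quotient, and there to use the trivial bound on partial spreads. This yields the uniform bound $|\cP_h|\le q^h+k-1$ for every $q$, which implies both cases of the statement. We assume $k\ge 2$. If $|\cP_h|\le k-2$ there is nothing to prove, so assume $|\cP_h|\ge k-1$.

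First fix $k-2$ distinct elements $E_1,\dots,E_{k-2}$ of $\cP_h$. Put $W=\langle E_1,\dots,E_{k-2}\rangle_q$. Complete this family by any further element of $\cP_h$ to a $k$-subset. By the pseudo-arc property these $k$ elements span $\PG(kh-1,q)$, and a dimension count shows that $W$ has dimension exactly $(k-2)h-1$. Hence the quotient geometry $\PG(kh-1,q)/W$ is isomorphic to $\PG(2h-1,q)$.

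For each remaining element $E\in\cP_h\setminus\{E_1,\dots,E_{k-2}\}$, the subspace $E$ meets $W$ trivially, since $\langle W,E\rangle_q$ is part of a spanning $k$-set. So the image $\widehat E=\langle W,E\rangle_q/W$ is an $(h-1)$-dimensional subspace of $\PG(2h-1,q)$. Now take two distinct remaining elements $E,F$. The $k$ elements $E_1,\dots,E_{k-2},E,F$ span the whole space, so $\langle\widehat E,\widehat F\rangle=\PG(2h-1,q)$. Since both have dimension $h-1$, it follows that $\widehat E\cap\widehat F=\emptyset$. In particular the map $E\mapsto\widehat E$ is injective, and its image is a partial $(h-1)$-spread of $\PG(2h-1,q)$.

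Counting points finishes the argument. A partial $(h-1)$-spread of $\PG(2h-1,q)$ consists of pairwise disjoint subspaces, each with $(q^h-1)/(q-1)$ points, inside a space with $(q^{2h}-1)/(q-1)$ points. So it has at most $q^h+1$ elements, and therefore
\[
|\cP_h|-(k-2)\le q^h+1, \qquad\text{that is,}\qquad |\cP_h|\le q^h+k-1 .
\]
This gives the odd case exactly, and the even case a fortiori.

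The only step needing care is verifying that $W$ has the right dimension and that the projections stay $(h-1)$-dimensional and pairwise disjoint. Both follow directly from applying the spanning condition to suitable $k$-subsets. If a sharper even-$q$ statement were intended, one would instead have to analyse pseudo-hyperovals more carefully in the quotient, but that is not needed for the bound as stated.
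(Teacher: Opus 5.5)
Your proof is correct. The paper gives no argument of its own here; it simply quotes Thas.

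Your route is the standard projection argument. Quotienting by $W=\langle E_1,\dots,E_{k-2}\rangle_q$ turns the remaining elements into a partial $(h-1)$-spread of $\PG(2h-1,q)$. The point count then gives the uniform bound $|\cP_h|\le q^h+k-1$, which implies both cases as printed.

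Such an elementary argument suffices because the printed statement is one weaker than Thas's result. For $h=1$, $k=3$ it would allow arcs of $\PG(2,q)$ with $q+3$ points ($q$ even) or $q+2$ points ($q$ odd), whereas the true maxima are $q+2$ and $q+1$. In the indexing $\PG(kh-1,q)$, Thas's bounds are $q^h+k-1$ for $q$ even, and $q^h+k-2$ for $q$ odd with $k\ge 3$.

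So your counting already gives the sharp even bound. For $k=3$ it is attained: field reduction of a hyperoval of $\PG(2,q^h)$ gives a pseudo-arc of size $q^h+2$. Your closing remark therefore has the parities reversed. Counting cannot reach the odd case, namely that $\PG(3h-1,q)$ has no pseudo-arc of size $q^h+2$ when $q$ is odd. Projecting from $k-3$ elements, that fact yields $q^h+k-2$.

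The citation buys that sharper, non-counting statement. Your argument buys a self-contained proof of the bound exactly as it is stated in the paper.

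One small slip: completing $E_1,\dots,E_{k-2}$ to a $k$-subset needs two further elements, not one. The reduction should therefore start from $|\cP_h|\ge k$; smaller sets, and the case $k=1$, satisfy the bound trivially.
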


A pseudo-arc $\cP_{h}$ is said to be {\em Desarguesian} if $\cP_h \subseteq \cD_h$, where $\cD_h$ is a Desarguesian $(h-1)$-spread of $\PG(kh-1, q)$. Note that, if $k >2$, the largest known pseudo-arc $\cP_h$ of $\PG(kh-1, q)$ is Desarguesian, it has size $q^h+1$ and arises from a normal rational curve of $\PG(k-1, q^h)$ by field reduction.

\begin{prop}
A pseudo-arc $\cP_h$ of $\PG(kh-1, q)$ is Desarguesian if and only if $\cP_h$ is fixed by a Singer subgroup of $\PG(kh-1, q)$ of order $\frac{q^h-1}{q-1}$ elementwise. 
\end{prop}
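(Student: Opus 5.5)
The plan is to prove the two implications separately, using the description of $\cD_h$ as the set of orbits of the Singer subgroup $H$ given just above.

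\emph{($\Rightarrow$).} Suppose $\cP_h \subseteq \cD_h$ for some Desarguesian $(h-1)$-spread $\cD_h$. Since all Desarguesian $(h-1)$-spreads of $\PG(kh-1,q)$ are $\PGL$-equivalent, I would fix a collineation $g$ mapping $\cD_h$ onto the spread obtained from the orbits of $\tilde H$ on $\F_{q^{kh}}$ in the field model. In that model, multiplication by $\alpha\in\F_{q^h}^*$ stabilizes every $\F_{q^h}$-subspace $\langle x\rangle_{q^h}$, hence every spread element. Conjugating the image of $\tilde H$ by $g^{-1}$ gives a Singer subgroup $H$ of order $\frac{q^h-1}{q-1}$, since it is the image of a conjugate of $\tilde H$. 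This $H$ fixes every element of $\cD_h$, and in particular every element of $\cP_h$.

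\emph{($\Leftarrow$).} Let $H$ be a Singer subgroup of order $\frac{q^h-1}{q-1}$ fixing each element of $\cP_h$. By definition $H$ is the image of $\tilde H^{g}$ for some $g\in\GL_{kh}(q)$, so after conjugating we may assume $H$ comes from $\tilde H=\{g_\alpha:\alpha\in\F_{q^h}^*\}$ acting on $\F_{q^{kh}}$. I claim that the $(h-1)$-subspaces of $\PG(kh-1,q)$ fixed by $H$ are exactly the elements of $\cD_h$. Let $W$ be an $h$-dimensional $\F_q$-subspace of $\F_{q^{kh}}$ invariant under $H$. The preimage of $H$ in $\GL_{kh}(q)$, modulo scalars, gives invariance under $\tilde H\cdot\F_q^*=\tilde H$, because $\F_q^*\subseteq\F_{q^h}^*$. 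Thus $W$ is closed under multiplication by $\F_{q^h}^*$, and it is also closed under addition. So $W$ is an $\F_{q^h}$-subspace of $\F_{q^{kh}}$ of $\F_q$-dimension $h$, that is, an $\F_{q^h}$-subspace of dimension $1$. Then $W=\F_{q^h}x$, which is an orbit of $\tilde H$ together with $0$, and hence an element of $\tilde{\cD}_h$. It follows that every element of $\cP_h$ lies in $\cD_h$, so $\cP_h$ is Desarguesian.

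\emph{Main obstacle.} The delicate step is lifting projective invariance to linear invariance. An element of $H$ fixing the projective subspace $\PG(W)$ means that a representative $g_\alpha$ maps $W$ to $W$. This holds for every representative, since scalars preserve any subspace, so there is no real issue, but it should be stated explicitly. One must also use that the conjugating element $g$ transports the spread $\cD_h$ consistently. The definition of $H$ via an arbitrary conjugate already allows this: the Desarguesian spread attached to $H$ is simply $g^{-1}$ applied to the standard one, and it is Desarguesian because it is $\PGL$-equivalent to the standard spread.
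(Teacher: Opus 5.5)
Your proof is correct and follows the same route as the paper. Both directions rest on identifying a Desarguesian $(h-1)$-spread with the set of point-orbits of a Singer subgroup of order $\frac{q^h-1}{q-1}$, transported from the field model $\F_{q^{kh}}$ by a projectivity. Your argument that an $H$-invariant $h$-dimensional $\F_q$-subspace is closed under multiplication by $\F_{q^h}$, and is therefore a $1$-dimensional $\F_{q^h}$-subspace, makes explicit the step $\cP_h \subseteq \cD_h$ that the paper asserts without justification.
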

\begin{proof}
If $\cD_h$ is a Desarguesian $(h-1)$-spread of $\PG(kh-1, q)$ and $\cP_h \subseteq \cD_h$, then there is a Singer subgroup of $\PG(kh-1, q)$ of order $\frac{q^h-1}{q-1}$, say $H$, stabilizing $\cD_h$ elementwise. Therefore $\cP_h$ is fixed by $H$ elementwise. Viceversa, if $H$ is a Singer subgroup of $\PG(kh-1, q)$ of order $\frac{q^h-1}{q-1}$ fixing $\cP_h$ elementwise then the orbits of $H$ on points of $\PG(kh-1, q)$ form a Desarguesian spread $\cD_h$ such that $\cP_h \subseteq \cD_h$.  
\end{proof}

\begin{lemma}\label{lemma:arc}
Let $\cP_{h}$ be a pseudo-arc of $\PG(kh-1,q)$ of size $k+1$. Then there exists a unique Segre variety $\cS_{k-1, h-1}(q)$ such that $\cP_{h} \subset \cR_{h, q}$.
\end{lemma}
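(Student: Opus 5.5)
Write $\cP_h=\{X_0,X_1,\dots,X_k\}$, with each $X_i$ the projectivization of an $h$-dimensional $\F_q$-subspace $U_i$ of $V=\F_q^{kh}$. The plan is to turn the pseudo-arc condition into a tensor decomposition $V\simeq \F_q^k\otimes \F_q^h$ in which every $U_i$ has the form $\langle x_i\rangle\otimes \F_q^h$. Existence and uniqueness of the Segre variety then reduce to linear algebra.

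\textbf{Existence.} Since any $k$ elements of $\cP_h$ span $\PG(kh-1,q)$, we have $V=U_1\oplus\cdots\oplus U_k$. Write $U_0=\{(f_1(u),\dots,f_k(u)) : u\in U_0\}$ with $f_i:U_0\to U_i$ the projections. If some $f_i$ had nonzero kernel, then $U_0$ and the $U_j$ with $j\ne i$ would meet nontrivially, so those $k$ subspaces would not span $V$. Hence each $f_i$ is an isomorphism. Now identify $U_0$ with $W=\F_q^h$ and each $U_i$ with $W$ via $f_i$. This gives $V\simeq \F_q^k\otimes W$ with $U_i=e_i\otimes W$ for $i=1,\dots,k$ and $U_0=(e_1+\cdots+e_k)\otimes W$. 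In coordinates these are $k\times h$ matrices whose nonzero entries lie in row $i$, respectively matrices with all rows equal. All of them are elements of the regulus $\cR_{h,q}$ of the Segre variety of rank-one matrices. This proves existence.

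\textbf{Uniqueness.} Suppose $\cS'$ is a second Segre variety $\cS_{k-1,h-1}(q)$ with $\cP_h\subset\cR'_{h,q}$. Then there is a tensor decomposition $V\simeq \F_q^k\otimes W'$ in which $U_i=x_i\otimes W'$ for vectors $x_0,\dots,x_k$. Because $U_1,\dots,U_k$ span $V$, the vectors $x_1,\dots,x_k$ form a basis, and after rescaling we may take $x_0=x_1+\cdots+x_k$. The pseudo-arc condition forces every coefficient to be nonzero. The maps $U_0\to U_i$ given by projection along the other summands are then $x_0\otimes w\mapsto x_i\otimes w$. So they agree with the $f_i$ above up to the chosen identifications.

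It follows that the isomorphism $\F_q^k\otimes W\to \F_q^k\otimes W'$ defined by $e_i\otimes f_0^{-1}(w)\mapsto x_i\otimes \phi(w)$, where $\phi$ identifies $U_0$ with $W'$, is the identity on $V$. Hence the two sets of rank-one tensors coincide, and so do the two Segre varieties. The point is that the rank-one locus $\{x\otimes w\}$ is determined by the decomposition data, and that data is pinned down canonically by the $k+1$ subspaces. Equivalently, the points of the Segre variety are exactly the points $\langle (a_1f_1(u),\dots,a_kf_k(u))\rangle$ with $u\in U_0\setminus\{0\}$ and $(a_1,\dots,a_k)\neq \0$, and this description depends only on $\cP_h$.

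\textbf{Main obstacle.} The delicate step is uniqueness, specifically checking that no other choice of scalars or identifications produces a different rank-one locus. The freedom available is rescaling each $x_i$ and applying a $\GL(W')$ change. Rescaling the $x_i$ is fixed by the normalization $x_0=\sum x_i$. A $\GL(W')$ change acts simultaneously on all the $W'$ factors, so it preserves the set $\{x\otimes w\}$. I would state this explicitly, using the point-set description above, so that uniqueness is visibly independent of all choices.
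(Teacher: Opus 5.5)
Your proof is correct, but it follows a different route from the paper's.

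\textbf{The paper's route.} The paper argues synthetically. Through each point $P\in\Sigma_{k+1}$ it takes the unique $(k-1)$-space spanned by points $P_i\in\Sigma_i$, $i=1,\dots,k$. This yields $\frac{q^h-1}{q-1}$ pairwise disjoint $(k-1)$-spaces. It then repeats the construction starting from $h+1$ of these (asserting that they form a pseudo-arc) to obtain $\frac{q^k-1}{q-1}$ disjoint $(h-1)$-spaces. Finally it invokes the characterization in \cite{TV} to recognize the two families as the reguli of a Segre variety. Uniqueness is left implicit in the fact that the transversals are determined by $\cP_h$.

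\textbf{Your route.} You replace the external characterization by linear algebra. Viewing $U_0$ as the graph of the isomorphisms $f_i$ gives an explicit isomorphism $V\simeq\F_q^k\otimes W$ that puts $\cP_h$ in normal form. This is the same block form $A_1=\cdots=A_k=I_h$ that the paper uses later for its transitivity count, and it makes existence immediate. Uniqueness holds because the comparison map between two admissible decompositions is $A\otimes\phi$ with $A\colon e_i\mapsto x_i$, and such a map preserves rank-one tensors.

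\textbf{Comparison.} Your point-set description $\{\langle(a_1f_1(u),\dots,a_kf_k(u))\rangle\}$ is exactly the union of the paper's transversal spaces $\langle f_1(u),\dots,f_k(u)\rangle$, $u\in U_0\setminus\{0\}$. So both proofs build the same object. Yours certifies directly that it is a Segre variety and makes uniqueness explicit. The paper's is shorter but depends on \cite{TV} and on the unverified pseudo-arc claim for the transversals.

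\textbf{Two small fixes.} First, show that the coefficients of $x_0$ in the basis $x_1,\dots,x_k$ are nonzero \emph{before} rescaling; you need this to reach $x_0=x_1+\cdots+x_k$. Second, the notation $f_0^{-1}$ is undefined; you mean the identification $U_0=W$, under which the comparison map is $e_i\otimes u\mapsto x_i\otimes\phi(u)$.
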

\begin{proof}
Let $\cP_{h} = \{\Sigma_1, \Sigma_2, \dots, \Sigma_k, \Sigma_{k+1}\}$. Every point $P$ in $\PG(kh-1, q) \setminus \left( \bigcup_{i = 1}^k \Sigma_i \right)$ lies on a unique $(k-1)$-dimensional subspace spanned by the points $P_1, \dots, P_k$, where $P_i \in \Sigma_i$, $i = 1, \dots, k$. Varying $P \in \Sigma_{k+1}$ a set of $\frac{q^h-1}{q-1}$ pairwise disjoint $(k-1)$-dimensional subspaces is constructed. Similarly, by starting from $h+1$ of these $(k-1)$-dimensional subspaces so obtained, that form a pseudo-arc of $\PG(kh-1, q)$, a set of $\frac{q^k-1}{q-1}$ pairwise disjoint $(h-1)$-dimensional subspaces arises. By \cite{TV} they are the $(k-1)$-regulus and the $(h-1)$-regulus of a Segre variety $\cS_{k-1, h-1}(q)$.
\end{proof}

Next, we observe that $\PGL_{kh}(q)$ is transitive on pseudo-arcs of $(h-1)$-dimensional subspaces in $\PG(kh-1,q)$ of size $k+1$ (see also \cite[Theorem 1.6]{Thas1971}). Let 
\begin{align}
\begin{pmatrix}
A_1 & A_2 & \dots & A_k 
\end{pmatrix} \label{blockmat}
\end{align}
be a block $1 \times k$ matrix over $\F_q$, where $A_i$ is a square matrix of order $h$ over $\F_q$, $i = 1, \dots, k$. Denote by $\Sigma_i$, $i = 1, \dots, k$, the $(h-1)$-dimensional subspace of $\PG(kh-1, q)$ whose underlying vector space is spanned by the rows of \eqref{blockmat} where 
\begin{align*}
A_j = 
\begin{cases}
I_h & \mbox{ if } j = i, \\
\0_h & \mbox{ if } j \ne i.
\end{cases}
\end{align*}
Here $I_h$ and $\0_h$ are the identity and zero matrix of order $h$, respectively. Let $\Sigma$ be an $(h-1)$-dimensional subspace of $\PG(kh-1, q)$ such that any $k$ of $\Sigma_1, \Sigma_2, \dots, \Sigma_k, \Sigma$ span the whole $\PG(kh-1, q)$. Then the underlying vector space of $\Sigma$ is spanned by the rows of \eqref{blockmat}, where $A_i \in \GL_h(q)$. Indeed, 
\begin{align*}
\rk \begin{pmatrix} 
A_1 & A_2 & \dots & A_k  \\
\0_h & I_h & \dots & \0_h \\
\vdots & \vdots & \ddots & \vdots \\
\0_h & \0_h & \dots & I_h 
\end{pmatrix} = 
\rk \begin{pmatrix} 
I_h & \0_h & \dots & \0_h \\
A_1 & A_2 & \dots & A_k  \\
\vdots & \vdots & \ddots & \vdots \\
\0_h & \0_h & \dots & I_h 
\end{pmatrix} = \ldots = 
\rk \begin{pmatrix} 
I_h & \0_h & \dots & \0_h \\
\0_h & I_h & \dots & \0_h \\
\vdots & \vdots & \ddots & \vdots \\
A_1 & A_2 & \dots & A_k 
\end{pmatrix} = hk.
\end{align*}
Therefore $\Sigma$ can be chosen in 
$
\frac{|\GL_h(q)|^k}{|\GL_h(q)|}
$
ways. 

Let $H$ be the stabilizer in $\PGL_{kh}(q)$ of each of the $(h-1)$-dimensional subspaces $\Sigma_1, \dots, \Sigma_k$. Note that a projectivity of $H$ is induced by a block matrix given by 
\begin{align}
\begin{pmatrix}
A_1 & \0_h & \dots & \0_h \\
\0_h & A_2 & \dots & \0_h \\
\vdots & \vdots & \ddots & \vdots \\
\0_h & \0_h & \dots & A_k
\end{pmatrix},  \label{blockmat1}
\end{align}
where $A_i \in \GL_h(q)$. Hence the centre $Z(H)$ of $H$ is $\left\{\rho I_{hk} \mid \rho \in \F_q^*\right\}$ and 
$
|H| = \frac{|\GL_h(q)|^k}{q-1}$.

Let $\Sigma_{k+1}$ be the $(h-1)$-dimensional subspace whose underlying vector space is spanned by the rows of \eqref{blockmat}, where $A_1 = A_2 = \ldots = A_k = I_h$. Then an element of $Stab_H(\Sigma_{k+1})$ is induced by \eqref{blockmat1} where $A_1 = A_2 = \ldots = A_k$. Moreover $Z(H) \subset Stab_H(\Sigma_{k+1})$ and 
$
Stab_H(\Sigma_{k+1}) \simeq \frac{\GL_h(q)}{Z(H)}$.

It follows that $H$ is transitive on the $(h-1)$-dimensional subspaces $\Sigma$ of $\PG(kh-1, q)$ such that any $k$ of $\Sigma_1, \Sigma_2, \dots, \Sigma_k, \Sigma$ span the whole $\PG(kh-1, q)$.

\begin{prop}\label{prop:segre}
Let $\cP_h$ be a pseudo-arc of $\PG(kh-1, q)$ of size $k+1$ and let $\cS_{k-1, h-1}(q)$ be the unique Segre variety such that $\cP_{h} \subset \cR_{h, q}$. If $\cD_h$ is a Desarguesian $(h-1)$-spread of $\PG(kh-1, q)$ such that $\cP_h \subset \cD_h$, then $\cR_{h, q} \subset \cD_h$.  
\end{prop}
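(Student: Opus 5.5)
Since $\cD_h$ is Desarguesian, it is projectively equivalent to the model spread $\cL(\Theta)$ of Section~\ref{sec:geometricsetting}. So I would fix coordinates with $\cD_h=\cL(\Theta)\subset\Pi$. The plan is to show that the Segre variety attached to $\cP_h$ by Lemma~\ref{lemma:arc} is exactly one whose $(h-1)$-regulus is the set of spread elements indexed by a canonical $q$-subgeometry of $\Theta$. By Lemma~\ref{intersection} and the discussion before it, such a regulus lies in $\cD_h$, so the uniqueness in Lemma~\ref{lemma:arc} will then finish the argument.

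\textbf{Step 1: move to the director space.} Write $\cP_h=\{\Sigma_1,\dots,\Sigma_{k+1}\}$ and let $P_i=\overline{\Sigma}_i\cap\Theta$, so that $\Sigma_i=X(P_i)$.
\begin{itemize}
\item The points $P_1,\dots,P_{k+1}$ are in general position in $\Theta\simeq\PG(k-1,q^h)$. Indeed, by field reduction, $k$ elements of $\cL(\Theta)$ span $\Pi$ if and only if the corresponding $k$ points span $\Theta$. This holds because the $\F_{q^h}$-span of points of $\Theta$ corresponds to the $\F_q$-span of the associated spread elements, which is the union of the spread elements indexed by that span.
\item Hence $P_1,\dots,P_{k+1}$ determine a unique $q$-order subgeometry $\theta'_q$ of $\Theta$.
\end{itemize}

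\textbf{Step 2: build a Segre variety inside $\cD_h$.} Since $\PGL_k(q^h)$ is transitive on $q$-order subgeometries of $\Theta$, I would change the $\F_{q^h}$-coordinates of $\Theta$ so that $\theta'_q$ becomes the canonical subgeometry $\theta_q$. Such a change, $v\mapsto vA$ with $A\in\GL_k(q^h)$, induces an $\F_q$-linear map $M_v\mapsto M_{vA}$ on $\cM$. Under this map $\cL(\Theta)$ goes to itself, since $\pi_v\mapsto\pi_{vA}$.

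Alternatively, one can avoid coordinates: the elements $X(P)$ with $P\in\theta'_q$ are the image of $\cR_{h,q}$ under a collineation preserving $\cD_h$. In either form, the set $\{X(P): P\in\theta'_q\}$ is the $(h-1)$-regulus of some Segre variety $\cS'$ (by \cite[Theorem 2.4]{Lavrauw2015}), and it is contained in $\cD_h$. It contains every $\Sigma_i$, because each $P_i\in\theta'_q$.

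\textbf{Step 3: apply uniqueness.} Lemma~\ref{lemma:arc} says the Segre variety with $\cP_h$ in its $(h-1)$-regulus is unique. Therefore $\cS'=\cS_{k-1,h-1}(q)$ and $\cR_{h,q}\subset\cD_h$.

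\textbf{Main obstacle.} The delicate point is Step 2: checking that a linear change of coordinates on $\Theta$ really induces an $\F_q$-collineation of $\Pi$ that preserves $\cL(\Theta)$ and carries Segre reguli to Segre reguli. This should follow directly from the Moore-matrix model, since $M_{vA}=M_v\,\mathrm{diag}$-free right multiplication is not available and one must instead use the rule $M_{vA}$ row by row, with $A^{\sigma^i}$ in row $i$. That rule is $\F_q$-linear in $v$.

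A secondary point is the general-position claim in Step 1. I would justify it by a dimension count: $\langle P_{i_1},\dots,P_{i_k}\rangle_{q^h}=\Theta$ if and only if the corresponding spread elements span $\Pi$.
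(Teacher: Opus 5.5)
Your proof is correct, but it takes a different route from the paper.

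\textbf{The paper's argument} is group-theoretic. Take a Singer subgroup $S$ of order $\frac{q^h-1}{q-1}$ whose point orbits are the members of $\cD_h$. It fixes every $\Sigma_i$, so it lies in the stabilizer of $\Sigma_1,\dots,\Sigma_{k+1}$, computed just before as the block matrices $\diag(A,\dots,A)$ with $A\in\GL_h(q)$. These matrices fix every element of $\cR_{h,q}$. Hence each element of $\cR_{h,q}$ is an $S$-invariant $(h-1)$-space, therefore a single $S$-orbit, therefore a member of $\cD_h$. The word ``unique'' in the paper's proof is not actually needed.

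\textbf{Your argument} works in the Moore-matrix model instead. You pull $\cP_h$ back to a frame $P_1,\dots,P_{k+1}$ of $\Theta$ and move the $q$-subgeometry it determines onto $\theta_q$ via $M_v\mapsto M_{vA}$. You then conclude with the uniqueness part of Lemma~\ref{lemma:arc}. Your ``main obstacle'' is resolved as you indicate: the map $M_v\mapsto M_{vA}$
\begin{itemize}
\item is $\F_q$-linear on $\cM$;
\item extends $\F_{q^h}$-linearly to $\overline{\cM}$ by right-multiplying the $i$-th row by $A^{\sigma^{i-1}}$;
\item commutes with $\psi$;
\item sends $\pi_v$ to $\pi_{vA}$.
\end{itemize}

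\textbf{What each route buys.} The paper's argument needs neither the director-space model nor any uniqueness statement, only the stabilizer computation. Yours rests on the uniqueness claimed in Lemma~\ref{lemma:arc}, which the paper's proof of that lemma does not spell out. It does hold: each member of the $(k-1)$-regulus must be the unique transversal of $\Sigma_1,\dots,\Sigma_k$ through its point of $\Sigma_{k+1}$. In exchange, your route gives slightly more. It identifies $\{\overline{\Sigma}\cap\Theta : \Sigma\in\cR_{h,q}\}$ as the $q$-subgeometry of $\Theta$ determined by the points $\overline{\Sigma}_i\cap\Theta$. That is exactly what lets one put $\cD_h$ and $\cR_{h,q}$ simultaneously in the canonical position of Subsection~\ref{sec:unique}, which is needed before Theorem~\ref{thm:theta} is invoked in the proof of Theorem~\ref{thm:intersection}.
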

\begin{proof}
Let $\cP_h = \{\Sigma_1, \dots, \Sigma_{k+1}\}$. It is enough to observe that the unique cyclic group of $Stab_H(\Sigma_{k+1})$ acting regularly on the points of each of the $(h-1)$-dimensional subspaces of $\cP_h$ is a Singer subgroup of $\PG(kh - 1, q)$ of order $\frac{q^h-1}{q-1}$ and fixes both $\cD_h$ and $\cS_{k-1, h-1}(q)$.  
\end{proof}

\begin{theorem}\label{thm:intersection}
Let $\cP_h$ be a pseudo-arc of $\PG(kh-1, q)$ of size $k+1$ and let $\cD_h$, $\cD_h'$ be distinct Desarguesian $(h-1)$-spreads of $\PG(kh-1, q)$ such that $\cP_h \subset \cD_h \cap \cD_h'$. Then $|\cD_h \cap \cD_h'| = \frac{q^{kr}-1}{q^r-1}$ for some proper divisor $r$ of $h$. Moreover, $\cD_h \cap \cD_h' = \cR^r_{h, q}$, where $\cR^r_{h, q}$ is the system of $(h-1)$-dimensional subspaces of a generalized Segre variety $\cS^r_{kr-1, h-1}(q)$. 
\end{theorem}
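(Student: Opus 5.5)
By Lemma~\ref{lemma:arc}, the pseudo-arc $\cP_h$ lies in the $(h-1)$-regulus $\cR_{h,q}$ of a unique Segre variety $\cS_{k-1,h-1}(q)$. By Proposition~\ref{prop:segre}, both $\cD_h$ and $\cD_h'$ contain $\cR_{h,q}$. Up to a collineation we may take $\cD_h=\cL(\Theta)$ in the Moore-matrix model of Section~\ref{sec:geometricsetting}, with $\cR_{h,q}$ corresponding to $\theta_q\subset\Theta$ (Lemma~\ref{intersection}). By Lemma~\ref{crucial} and the proof of Proposition~\ref{prop:point}, the director spaces of $\cD_h'$ also belong to $\cR_{k,q^h}$. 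So we may write $\cD_h'=\cL(\Phi)$ with $\Phi\in\cR_{k,q^h}$ and $\Phi\neq\Theta^{\psi^i}$ for all $i$.

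Next, let $\cS\subseteq\Theta$ be the set of points $P\in\Theta$ such that $X(P)\in\cD_h\cap\cD_h'$. Then $\theta_q\subseteq\cS$, so $\cS$ contains $k+1$ points in general position. Since $\cD_h\ne\cD_h'$ and $\cD_h'\supseteq \cR_{h,q}$, Corollary~\ref{cor:point} and Theorem~\ref{thm:theta} show the following: for any $P\in\cS\setminus\theta_q$, the extended element $\overline{\pi}_v$ meets $\cS_{k-1,h-1}(q^h)$ in more than $\{P,\dots,P^{\psi^{h-1}}\}$. Moreover, $P$ lies in a proper canonical subgeometry $\theta_{q^r}$ of $\Theta$, with $r$ a proper divisor of $h$. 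I will argue the converse explicitly with Moore matrices. If $v\in\F_{q^r}^k$, then the rows of $\diag(\alpha_i)M_v$ repeat with period $r$. Choosing $\alpha_i$ supported on one residue class mod $r$ gives rank-one matrices, and these lie on $\Phi$ exactly when $\Phi$ corresponds to the appropriate element of $\cR_{k,q^h}$. The aim of this step is to compute the whole set $\cS$, not just one extra point.

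The key step is to show that $\cS$ is closed under $q$-subgeometries. The idea is to use the symmetry of the setting. The group $\PGL_k(q^h)$ acting on the first factor, that is, matrices $M_v\mapsto M_{vA}$ with $A\in\GL_k(q^h)$, preserves $\cD_h=\cL(\Theta)$ elementwise-compatibly. It maps $\theta_q$ to any $q$-subgeometry of $\Theta$, and maps $\cS_{k-1,h-1}(q)$ to another Segre variety whose $(h-1)$-regulus lies in $\cD_h$. Take $k+1$ points of $\cS$ in general position. The corresponding spread elements lie in $\cD_h\cap\cD_h'$ and form a pseudo-arc of size $k+1$. Applying Lemma~\ref{lemma:arc} and Proposition~\ref{prop:segre} to this pseudo-arc, both spreads contain the whole $(h-1)$-regulus it determines, which corresponds to the $q$-subgeometry of $\Theta$ spanned by those points. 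Hence that subgeometry is contained in $\cS$. I must check that general position in $\Theta$ corresponds to the pseudo-arc condition; this holds because $k$ points of $\Theta$ span $\Theta$ iff the field-reduced subspaces span $\Pi$.

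Once closure is established, Theorem~\ref{thm:general} (for $q>2$) gives that $\cS$ is a $q^r$-subgeometry of $\Theta$ containing $\theta_q$, for some divisor $r$ of $h$. It must therefore be the canonical $\theta_{q^r}$, since subgeometries containing $\theta_q$ over intermediate fields are canonical. The divisor $r$ is proper because $\cD_h\ne\cD_h'$. By the definition in Section~\ref{sec:generalized-segre}, $\cD_h\cap\cD_h'=\cR^r_{h,q}$, whose size is $(q^{kr}-1)/(q^r-1)$.

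I expect two obstacles. The first is the closure step: making the transport of structure rigorous, and making sure $\cS$ is genuinely defined through $\Theta$ for both spreads. The second is the case $q=2$, which Theorem~\ref{thm:general} excludes and which might require a separate direct argument via the explicit rank-one computation of Theorem~\ref{thm:theta}.
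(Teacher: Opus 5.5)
Your proposal is correct and uses the same mechanism as the paper's proof. Lemma~\ref{lemma:arc} and Proposition~\ref{prop:segre} make the set of points of $\Theta$ corresponding to common spread elements closed under $q$-subgeometries, and Theorem~\ref{thm:general} then forces that set to be a canonical $\theta_{q^r}$ with $r<h$. Applying Theorem~\ref{thm:general} once to the whole set $\cS$ is cleaner than the paper's ``find one extra element via Theorem~\ref{thm:theta}, fill $\theta_{q^r}$, repeat'', and it makes your Moore-matrix converse paragraph unnecessary. Your $q=2$ concern is legitimate, but the paper shares it: its proof also invokes Theorem~\ref{thm:general}, which needs $q>2$, without stating that hypothesis.
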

\begin{proof}
Let $\cP_h$ be a pseudo-arc of $\Pi \simeq \PG(kh-1, q)$ of size $k+1$ and let $\cL(\Theta) = \cD_h$, $\cD_h'$ be distinct Desarguesian $(h-1)$-spreads of $\Pi \simeq \PG(kh-1, q)$ such that $\cP_h \subset \cD_h \cap \cD_h'$. By Lemma \ref{lemma:arc}, there is a Segre variety $\cS_{k-1, h-1}(q)$ of $\Pi$ such that $\cP_h \subset \cR_{h, q}$ and by Proposition \ref{prop:segre}, $\cR_{h, q} \subseteq \cD_h \cap \cD_h'$. If $h$ is prime, then $\cD_h \cap \cD_h' = \cR_{h, q}$, by Theorem \ref{thm:theta}. Assume that $\cR_{h, q} \subset \cD_h \cap \cD_h'$ and let $\Xi \in \cD_h \cap \cD_h'$, with $\Xi \notin \cR_{h, q}$. With the same notation used in subsection \ref{sec:unique}, let $P = \overline{\Xi} \cap \Theta$. By Theorem \ref{thm:theta}, there exists a proper divisor $r$ of $h$ such that $h = rt$ and $\theta_{q^r}$ is the canonical $q^r$-order subgeometry of $\Theta$ containing $P$ and $\theta_q$. By Proposition \ref{prop:segre} and Theorem \ref{thm:general}, it follows that $\Phi \in \cD_h \cap \cD_h'$, if $\overline{\Phi} \cap \Theta$ is a point of $\theta_{q^r}$. Hence, with the same notation used in subsection \ref{sec:generalized-segre}, $\cR^r_{h, q} \subseteq \cD_h \cap \cD_h'$. Then two possibilities arise: either $\cD_h \cap \cD_h' = \cR^r_{h, q}$ or $\cR^r_{h, q} \subset \cD_h \cap \cD_h'$. If the latter case occurs, then the previous argument can be repeated, and the statement follows.      
\end{proof}

The result in Theorem \ref{thm:intersection} extends and generalizes \cite[Corollary 1.6]{Rottey}. It would be nice to determine the number of Desarguesian $(h-1)$-spreads of $\PG(kh-1, q)$ that pairwise intersect in precisely $\cR^r_{h, q}$. By Theorem \ref{thm:intersection} such a number equals $\frac{|G|}{|N|}$, where $G$ is the stabilizer in $Stab_{\PGL_{kh}(q)}(\cS^r_{kr-1, h-1}(q))$ of $\cR^r_{h, q}$ and $N$ is the normalizer of a Singer cyclic subgroup of order $\frac{q^h-1}{q-1}$ in $G$.

\bigskip
{\footnotesize
\noindent\textit{Acknowledgments.}
The research was supported by the Italian National Group for Algebraic and Geometric Structures and their Applications (GNSAGA--INdAM). }

\bibliographystyle{abbrv}
\bibliography{biblio}

\end{document}